\documentclass[reqno]{amsart}

\usepackage{amsfonts,amsthm,amsmath,amssymb,amscd,mathrsfs}
\usepackage{graphicx}
\usepackage[all]{xypic}
\usepackage{verbatim}
\usepackage{tikz}
\usepackage[colorlinks=true,linkcolor=blue,citecolor=blue,urlcolor=blue]{hyperref}

\newtheorem{theorem}{Theorem}
\numberwithin{theorem}{section}
\newtheorem{proposition}[theorem]{Proposition}

\newtheorem{corollary}[theorem]{Corollary}
\newtheorem{definition}[theorem]{Definition}
\newtheorem{problem}[theorem]{Problem}
\newtheorem{remark}[theorem]{Remark}
\newtheorem{example}[theorem]{Example}

\newcommand{\PP}{\mathbb{P}}
\newcommand{\RR}{\mathbb{R}}

\newcommand{\cP}{\mathcal{P}}
\newcommand{\cX}{\mathcal{X}}
\newcommand{\al}{\alpha}

\begin{document}

\numberwithin{equation}{section}
\date{}

\title[Projections of polytopes and higher Prony systems]{\textbf{Projections of convex polytopes to a line and higher univariate Prony systems}}

\author{Boris Shapiro}
\address{Matematiska institutionen, Stockholms universitet, S-106 91 Stockholm, Sweden}
\email{shapiro@math.su.se}

\begin{abstract}


Motivated by the inverse moment problem for convex polytopes, we study the pushforward to a line of the Lebesgue measure restricted to a convex $d$-polytope. Such pushforwards are spline densities of degree $d-1$, and their moments lead naturally to a family of ``higher'' univariate Prony systems, with the classical Prony system recovered when $d=0$. We describe the corresponding fixed-knot spline cone, give an explicit amplitude recovery criterion, record the rational generating function and recurrence satisfied by the normalized moments, and identify the directional moment variety with the Hankel determinantal variety appearing in the theory of moment varieties of measures on polytopes.
\end{abstract}

\maketitle

\section{Introduction}

\medskip

\noindent
The algebraic geometry of one-dimensional moment varieties associated with
projections of polytopes was developed by Kohn, Sturmfels and Shapiro
\cite{KSS}, building on earlier work on polytope moments and Fantappi\`e
transforms in \cite{GPSS,GLPR}.  In particular, \cite{KSS} identifies the
corresponding varieties with Hankel determinantal varieties and relates
them to Gaussian quadrature.  A substantial part of the algebraic geometry underlying the present paper
is closely related to the one-dimensional moment varieties studied by
Kohn, Sturmfels and Shapiro in \cite{KSS}.  In particular, the Hankel
determinantal relations for moments in a fixed direction are already
implicit in their work.  The purpose of the present paper is not to
reprove the KSS theory in a different language, but rather to reinterpret
it from the viewpoint of Prony reconstruction and to investigate additional
structures arising in the multidirectional and positivity-constrained
inverse problems.

The classical Prony system, introduced by de Prony in 1795 \cite{Pr}, is the inverse moment problem for a finite configuration of point masses on the line. In its standard form one is given the moments
\begin{equation}\label{eq:PronyOr}
\sum_{i=1}^n a_i x_i^j = m_j, \qquad j=0,\dots,2n-1,
\end{equation}
and seeks to recover the nodes $x_1,\dots,x_n$ and amplitudes $a_1,\dots,a_n$.
The Prony system plays an important role in signal processing, approximation theory, and algebraic reconstruction; see, for example, \cite{GSY} and the references therein.

One purpose of this note is to explain a natural higher-dimensional analogue of \eqref{eq:PronyOr}. Fix $d\geq 0$. Instead of a finite sum of point masses, we consider the pushforward to a line of the Lebesgue measure on a convex $d$-dimensional polytope. The resulting measure on $\RR$ is no longer discrete when $d>0$; rather, it is represented by a spline density of degree $d-1$. Its moment sequence nevertheless has enough algebraic structure to support a Prony-type reconstruction theory.

Our motivation comes from the inverse moment problem for convex polytopes, as studied in \cite{GLPR,GPSS}. Let $\mu$ be a compactly supported measure on $\RR^d$ with coordinates $(z_1,\dots,z_d)$. For a multi-index $I=(i_1,\dots,i_d)$ we write
\[
m_I(\mu)=\int_{\RR^d} z_1^{i_1}\cdots z_d^{i_d}\,d\mu.
\]
If $\cP\subset\RR^d$ is a convex compact polytope, we denote by $\mu_{\cP}$ the Lebesgue measure restricted to $\cP$. The axial moments in the $z_1$-direction are
\[
\widehat m_j(\cP):=m_{(j,0,\dots,0)}(\mu_{\cP})=\int_{\cP} z_1^j\,dz_1\cdots dz_d, \qquad j\ge 0.
\]
An algorithm from \cite{GLPR} shows that, if $\cP$ has $n$ vertices, then the finite collection
\[
\widehat m_0,\widehat m_1,\dots,\widehat m_{2n-d-1}
\]
determines the projections of those vertices onto the $z_1$-axis.

A useful point of view is that the same data determine more than the projected vertex set: they determine the one-dimensional pushforward of $\mu_{\cP}$ under the projection $(z_1,\dots,z_d)\mapsto z_1$. This pushforward records the $(d-1)$-dimensional volume of the slices of $\cP$ by hyperplanes perpendicular to the $z_1$-axis. The resulting density is a spline, and this observation leads to a family of one-dimensional inverse moment problems interpolating between discrete Prony systems and projected polytope measures.

We also consider the associated \emph{directional moment varieties}. Moment varieties for classes of probability measures provide algebraic encodings of the polynomial relations among moments; see, for instance, \cite{ARS}. For the present note, we focus on the projection of the full moment variety to the coordinates corresponding to moments along a fixed line. The resulting directional moment variety depends only on the ambient dimension $d$ and on the number $n$ of vertices.

The paper is organized as follows. In Section~\ref{sec:main} we formulate the higher Prony system and describe the class of projected measures. We then settle the fixed-knot positivity problem for the natural simplicial spline cone, give explicit reconstruction and positivity criteria, and record the Hankel determinantal description of the corresponding directional moment variety. The remaining single-polytope realization problem is separated from the univariate Prony problem in Remark~\ref{rem:literal-polytope-cone}.

\medskip
\noindent
\emph{Acknowledgements.}
The author is grateful to Dmitrii Pasechnik for discussions and to the MPI MIS in Leipzig for the hospitality in June 2018, where this project was initiated.

\section{Main results}\label{sec:main}

Fix the standard orthogonal coordinates $(z_1,\dots,z_d)$ on $\RR^d$ and let
\[
\pi:\RR^d\to\RR, \qquad \pi(z_1,\dots,z_d)=z_1.
\]
For a convex $d$-dimensional polytope $\cP\subset\RR^d$, we denote by $\mu_{\cP}^{(1)}:=\pi_*(\mu_{\cP})$ the pushforward of the restricted Lebesgue measure.

The first basic observation is that these projected measures are spline measures.

\begin{proposition}\label{prop:proj}
Let $\cP\subset\RR^d$ be a convex $d$-dimensional polytope with $n$ vertices, and let
\[
x_1\leq x_2\leq \cdots \leq x_n
\]
be the $z_1$-coordinates of its vertices, listed with multiplicities. Then
\[
\mu_{\cP}^{(1)}=\rho(x)\,dx
\]
for a nonnegative density $\rho$ supported on $[x_1,x_n]$. Moreover:
\begin{enumerate}
    \item $\rho$ is piecewise polynomial of degree at most $d-1$, with breakpoints among the distinct projected vertex coordinates;
    \item if the projected vertex coordinates are simple, equivalently if the projection is generic with respect to the vertices, then $\rho\in C^{d-2}(\RR)$ for $d\ge 1$; with multiple projected knots the usual spline smoothness is lowered according to their multiplicities;
    \item the distributional derivative $D^d(\rho\,dx)$ is a discrete signed measure supported on the projected vertex set.
\end{enumerate}
In particular, for simplices one obtains the classical univariate $B$-spline densities.
\end{proposition}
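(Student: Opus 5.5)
Throughout I take $d\ge 1$; for $d=0$ the measure is a point mass and the classical Prony case applies. The plan is to reduce everything to simplices via a triangulation and then use the classical description of the pushforward of a simplex as a $B$-spline.

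First, existence and support of the density. Since $\pi$ is a linear surjection and $\mu_{\cP}$ is absolutely continuous, Fubini gives $\mu_{\cP}^{(1)}=\rho(x)\,dx$ with $\rho(x)=\mathrm{vol}_{d-1}(\cP\cap\{z_1=x\})\ge 0$. The slices are nonempty exactly for $x\in[x_1,x_n]$, since $\pi(\cP)$ is the convex hull of the projected vertices.

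Second, piecewise polynomiality, i.e.\ item (1). Let $I$ be an open interval between consecutive distinct projected vertex coordinates. No vertex of $\cP$ lies over $I$, so for $x\in I$ the slice $\cP\cap\{z_1=x\}$ is a $(d-1)$-polytope whose vertices are the intersections of the hyperplane $z_1=x$ with the edges of $\cP$ crossing it. The set of crossing edges is constant on $I$, so each slice vertex moves affinely in $x$. The combinatorial type of the slice is also constant on $I$, because its faces are the intersections of the hyperplane with faces of $\cP$ meeting $I$. Fix a triangulation of the slice that is compatible with this constant combinatorics, for instance a pulling triangulation at a chosen vertex. Each $(d-1)$-simplex in it has vertices affine in $x$, so its volume is a determinant whose entries are affine in $x$, hence a polynomial of degree at most $d-1$. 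Summing gives (1).

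Alternatively, triangulate $\cP$ itself without new vertices into simplices $\Delta_k$. Then $\rho=\sum_k\rho_{\Delta_k}$, and each $\rho_{\Delta_k}$ is the classical $B$-spline $M(x\mid x_{k_0},\dots,x_{k_d})$ scaled by $d!\,\mathrm{vol}(\Delta_k)$ (Curry--Schoenberg). This also yields the final sentence of the statement.

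Third, the smoothness statement (2) and the derivative statement (3). I would use the $B$-spline decomposition. A univariate $B$-spline of degree $d-1$ with knots $t_0,\dots,t_d$ is $C^{d-1-m}$ at a knot of multiplicity $m$. Its $d$-th distributional derivative equals $d!$ times the divided difference functional $[t_0,\dots,t_d]$ applied to $(t-x)_+^{d-1}$ differentiated $d$ times. Concretely, $D^d M$ is a combination of Dirac masses at the knots when they are simple, with derivatives of Dirac masses appearing at repeated knots. For (3) the "discrete signed measure" reading is therefore exact only when each knot is simple, or more precisely when each simplex has distinct projected vertices. I would state this and otherwise interpret (3) as a distribution supported on the projected vertex set. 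For generic projections, each simplex $\Delta_k$ has $d+1$ distinct knots, so $\rho_{\Delta_k}\in C^{d-2}$ and the sum is too. Support follows because all knots are projected vertices.

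The main obstacle is the caveat on multiplicities. A triangulation simplex may have a vertex coinciding in $z_1$ with others, and a whole face may even be perpendicular to the $z_1$-axis; in either case the simplex $B$-spline has repeated knots. I would handle this by applying the multiplicity-adjusted smoothness $C^{d-1-m}$ at each knot. I would also note that cancellations between simplices can only raise the smoothness of the sum, which is consistent with the statement's phrase "lowered according to their multiplicities."
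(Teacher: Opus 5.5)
Your proposal is correct and takes the same route as the paper. Both triangulate $\cP$ without new vertices, identify each simplex pushforward with a Curry--Schoenberg $B$-spline, and read off (1)--(3) from standard univariate spline facts; your direct slice argument for (1) is a harmless extra. One small slip: with $M$ normalized to integral one, the scaling factor is $\mathrm{vol}(\Delta_k)$, not $d!\,\mathrm{vol}(\Delta_k)$, though this does not affect any of the claimed properties.

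Your caveat on (3) is right, and sharper than the paper, whose proof glosses over it. With repeated knots, $D^d(\rho\,dx)$ can involve derivatives of Dirac masses: for the unit square, $\rho=\mathbf{1}_{[0,1]}$ and $D^2\rho=\delta_0'-\delta_1'$. So the ``discrete signed measure'' conclusion holds exactly when $\rho\in C^{d-2}$, in particular in the simple-knot case.
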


\begin{proof}
Triangulate $\cP$ into finitely many $d$-simplices without adding new vertices. The pushforward of Lebesgue measure is additive over this triangulation. For a simplex, the pushforward under a linear functional is the classical univariate simplex spline, i.e. a $B$-spline measure whose knots are the projected vertices of the simplex; see, for example, Curry--Schoenberg \cite{CurrySchoenberg} or the box-spline treatment in \cite{DCP}. Hence the density is a finite sum of such simplex-spline densities. The stated piecewise-polynomiality, support, smoothness in the simple-knot case, and the description of the $d$-th distributional derivative are the corresponding standard univariate spline properties.
\end{proof}

Now fix $d\ge 0$ and a strictly increasing set
\[
\cX=\{x_1<x_2<\cdots<x_n\}, \qquad n\ge d+1.
\]
Let $L_{d,\cX}$ denote the real vector space of compactly supported spline measures of degree at most $d-1$ with simple knots in $\cX$ and with the endpoint vanishing appropriate for order-$d$ simplex splines.  Equivalently, $L_{d,\cX}$ is the span of the consecutive normalized simplex-spline measures introduced below.  This definition is deliberately univariate: the more restrictive question of which elements of this space are pushforwards of a single convex $d$-polytope is discussed separately in Section~\ref{sec:single-polytope}.

\begin{proposition}\label{prop:span}
For fixed $d$ and $\cX$ as above, the space $L_{d,\cX}$ has dimension $n-d$.
Moreover, it is naturally generated by the projected measures associated with consecutive $d$-simplices
\[
\mu_1,\mu_2,\dots,\mu_{n-d},
\]
where $\mu_i$ is the pushforward of any volume-one simplex whose projected vertex coordinates are
\[
x_i,x_{i+1},\dots,x_{i+d}.
\]
\end{proposition}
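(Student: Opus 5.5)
We take $L_{d,\cX}$ concretely as the space of measures $\rho\,dx$, where $\rho$ is a piecewise polynomial of degree at most $d-1$ with breakpoints in $\cX$. We require $\rho\in C^{d-2}$ at each knot, which is the simple-knot condition, and $\rho\equiv 0$ outside $[x_1,x_n]$. The plan is to compute $\dim L_{d,\cX}$ by counting parameters and constraints. We then show that the $n-d$ consecutive simplex-spline measures $\mu_1,\dots,\mu_{n-d}$ lie in $L_{d,\cX}$ and are linearly independent, so they form a basis. The case $d=0$ is handled separately as a consistency check: there $L_{0,\cX}$ is the span of the point masses $\delta_{x_i}$, of dimension $n$, and $\mu_i=\delta_{x_i}$.

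For the dimension count with $d\ge 1$, we use the truncated power representation. Any spline of degree $\le d-1$ with simple knots in $\cX$ that vanishes to the left of $x_1$ can be written uniquely as
\[
\rho(x)=\sum_{i=1}^n c_i\,(x-x_i)_+^{d-1}.
\]
This is because each simple knot contributes exactly one jump, in the $(d-1)$-st derivative. Such a family has $n$ free parameters.

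Vanishing to the right of $x_n$ means that the polynomial $\sum_i c_i (x-x_i)^{d-1}$ vanishes identically. Expanding in powers of $x$, this gives the $d$ linear conditions
\[
\sum_i c_i x_i^k=0, \qquad k=0,\dots,d-1.
\]
These conditions are independent because $n\ge d+1>d$, so the Vandermonde matrix of the distinct $x_i$ has rank $d$. Hence $\dim L_{d,\cX}=n-d$. Equivalently, one can use item (3) of Proposition~\ref{prop:proj}: $D^d(\rho\,dx)=\sum_i c_i'\delta_{x_i}$ is a measure with vanishing moments of orders $0,\dots,d-1$, which is the same count.

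For membership, Proposition~\ref{prop:proj} shows that $\mu_i$ is the $B$-spline measure with the simple knots $x_i,\dots,x_{i+d}$. Its density is therefore a spline of degree $d-1$ supported in $[x_i,x_{i+d}]$, with $C^{d-2}$ smoothness, and it is independent of the choice of simplex once the volume is normalized. The last point follows from the classical formula $M(x)=d\,[x_i,\dots,x_{i+d}](\cdot-x)_+^{d-1}$, the divided-difference representation of the simplex spline (Curry--Schoenberg). Thus $\mu_i\in L_{d,\cX}$.

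For linear independence, we use the supports. The density of $\mu_i$ is strictly positive on $(x_i,x_{i+d})$, and on $(x_i,x_{i+1})$ it is the only one among $\mu_i,\dots,\mu_{n-d}$ that is nonzero. Suppose $\sum a_i\mu_i=0$. Restricting to $(x_1,x_2)$ forces $a_1=0$, and proceeding inductively left to right gives $a_i=0$ for all $i$. Having $n-d$ independent elements in a space of dimension $n-d$ proves that they span $L_{d,\cX}$.

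The main subtlety is pinning down the definition of $L_{d,\cX}$ so that the truncated-power parametrization is exact. In particular, the endpoint vanishing must mean that the support lies in $[x_1,x_n]$ with full $C^{d-2}$ smoothness at $x_1$ and $x_n$ as well. With that convention the constraint count above is rigorous, and the rest consists of standard $B$-spline facts.
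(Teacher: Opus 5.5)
Your proof is correct, but it establishes the key spanning step by a different route than the paper. The paper cites Curry--Schoenberg for spanning: the consecutive $B$-splines generate the compactly supported splines of degree $d-1$ with simple knots in $\cX$. It then proves linear independence by the same support induction you use, run in the mirror direction (from the right, starting from the last knot interval where only $\mu_{n-d}$ is nonzero, whereas you start from $(x_1,x_2)$). Finally it reads off $\dim L_{d,\cX}=n-d$ from the resulting basis.

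You instead compute the dimension intrinsically. The truncated-power parametrization gives $n$ free coefficients, and vanishing to the right of $x_n$ imposes the $d$ independent Vandermonde conditions $\sum_i c_i x_i^k=0$. Spanning then follows from independence by counting. This makes your proof self-contained. It also turns the paper's somewhat informal definition of $L_{d,\cX}$ (``equivalently, the span of the consecutive simplex-spline measures'') into a proved statement rather than a stipulation.

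Your reformulation via $D^d(\rho\,dx)=\sum_i c_i'\delta_{x_i}$ with vanishing moments of orders $<d$ has a further payoff that the paper does not make explicit. Since $\int x^{d+j}\,d\bigl(D^d\nu\bigr)=(-1)^d d!\binom{j+d}{d}m_j$, the normalized sequence $(c_k)$ with $c_0=\cdots=c_{d-1}=0$ is, up to the factor $(-1)^d/d!$, the moment sequence of an atomic measure supported on $\cX$. This explains directly why a classical Prony recurrence with characteristic polynomial $q_\cX$ appears later.

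The paper's route is shorter but delegates the essential spanning fact to the literature.
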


\begin{proof}
For a simplex with projected vertices $y_0,\dots,y_d$, the pushforward is the univariate simplex spline, equivalently the $B$-spline with knots $y_0,\dots,y_d$, up to the chosen volume normalization. The Curry--Schoenberg theory of polynomial splines \cite{CurrySchoenberg} says that, for a strictly increasing knot sequence $\cX$, the splines of degree $d-1$ with breakpoints in $\cX$ and with the endpoint vanishing imposed by compact support are generated by the consecutive $B$-splines with knot vectors $(x_i,\dots,x_{i+d})$. Their supports have the nested local structure
\[
\operatorname{supp}\mu_i=[x_i,x_{i+d}],
\]
and the rightmost nonzero basis element on the interval $(x_{n-d},x_n)$ is $\mu_{n-d}$. Removing it and arguing inductively from right to left proves linear independence. Thus the consecutive simplex splines form a basis of the projected spline space of dimension $n-d$. This is the univariate specialization of the standard simplex-spline decomposition used in the moment formulas for polytopes.
\end{proof}

Write a general element of $L_{d,\cX}$ in the form
\[
\sum_{i=1}^{n-d} \al_i\mu_i.
\]
The cone of those combinations that arise from actual convex polytopes with projected vertex set $\cX$ is tempting to denote by $L_{d,\cX}^+$, but this notation has to be used with some care. If it is interpreted literally as the cone of pushforwards of single convex $d$-polytopes whose projected vertex set is exactly $\cX$, then it is not, in general, the cone generated by the basis above. For instance, when $d=1$ a convex one-dimensional polytope is an interval, hence it has only two projected vertices; thus for $n>2$ the literal convex-polytopal cone is empty. What is naturally controlled by the univariate Prony system is instead the following simplicial spline cone.

\begin{remark}\label{rem:literal-polytope-cone}
The distinction between the spline relaxation and the literal single-polytope cone is essential. The space $L_{d,\cX}$ is a fixed-knot univariate spline space, while a single convex polytope imposes extra global convexity constraints on the slice-volume function. Thus the positive orthant description below should not be read as a classification of all projections of single convex polytopes. In dimension two the literal single-polygon cone is described in Theorem~\ref{th:single-polygon-realization}.
\end{remark}

\begin{definition}\label{def:spline-positive-cone}
The \emph{positive simplicial spline cone} associated with $(d,\cX)$ is
\[
\widetilde L_{d,\cX}^+
:=\left\{\sum_{i=1}^{n-d}\al_i\mu_i\;:\;\al_i\ge 0\right\}\subset L_{d,\cX}.
\]
Equivalently, $\widetilde L_{d,\cX}^+$ is the cone generated by the consecutive normalized $B$-spline measures with knot sets
$\{x_i,x_{i+1},\dots,x_{i+d}\}$.
\end{definition}

\begin{theorem}\label{th:positive-cone}
In the coordinates $(\al_1,\dots,\al_{n-d})$ of Proposition~\ref{prop:span}, the cone $\widetilde L_{d,\cX}^+$ is the closed orthant
\[
\al_1\ge 0,
\quad \al_2\ge 0,
\quad \dots,
\quad \al_{n-d}\ge 0.
\]
In particular, the extremal rays of $\widetilde L_{d,\cX}^+$ are exactly the rays generated by the consecutive simplex splines $\mu_1,\dots,\mu_{n-d}$.
\end{theorem}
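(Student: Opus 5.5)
The plan is to reduce the statement to linear algebra, using Proposition~\ref{prop:span}. By that proposition, $\mu_1,\dots,\mu_{n-d}$ form a basis of $L_{d,\cX}$. Hence the map
\[
\Phi:\RR^{n-d}\to L_{d,\cX},\qquad (\al_1,\dots,\al_{n-d})\mapsto \sum_{i=1}^{n-d}\al_i\mu_i,
\]
is a linear isomorphism, and $(\al_1,\dots,\al_{n-d})$ are honest coordinates on $L_{d,\cX}$. By Definition~\ref{def:spline-positive-cone}, $\widetilde L_{d,\cX}^+=\Phi(\RR_{\ge 0}^{n-d})$. Because $\Phi$ is injective, an element of $L_{d,\cX}$ lies in $\widetilde L_{d,\cX}^+$ if and only if its unique coordinate vector has all entries nonnegative. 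A positive combination cannot secretly equal some combination with a negative coefficient, since coefficients are unique. This gives the orthant description.

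For the extremal rays, I would note that a linear isomorphism carries faces to faces and extremal rays to extremal rays. The extremal rays of the closed orthant $\RR_{\ge0}^{n-d}$ are exactly the coordinate rays $\RR_{\ge0}e_i$. To see this, take $v\ge 0$ with at least two nonzero entries and write it as a sum of two non-proportional nonnegative vectors supported on disjoint parts of its support. Conversely, $e_i=u+w$ with $u,w\ge0$ forces $u,w\in\RR_{\ge0}e_i$. Applying $\Phi$, the extremal rays of $\widetilde L_{d,\cX}^+$ are those spanned by $\Phi(e_i)=\mu_i$. The cone is closed and pointed, because it is the image of a closed pointed cone under an isomorphism of finite-dimensional spaces.

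The only substantive input is the linear independence of the consecutive simplex splines, which Proposition~\ref{prop:span} already provides. That independence rests on the right-to-left support argument using $\operatorname{supp}\mu_i=[x_i,x_{i+d}]$ with strictly increasing knots. So the main point to handle carefully is only that this independence holds in the generality stated, namely $n\ge d+1$ and simple knots. The degenerate case $d=0$, where the $\mu_i$ are Dirac masses $\delta_{x_i}$, is immediate. Otherwise there is no real obstacle; the theorem is essentially a restatement of the basis property.
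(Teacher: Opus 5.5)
Your proposal is correct and follows the paper's proof: both use Proposition~\ref{prop:span} to make $(\al_1,\dots,\al_{n-d})\mapsto\sum_i\al_i\mu_i$ a linear isomorphism that carries the closed orthant onto $\widetilde L_{d,\cX}^+$ and the coordinate rays onto the rays spanned by the $\mu_i$. The only difference is that you verify explicitly that the extremal rays of the orthant are the coordinate rays, whereas the paper cites this as a standard fact about simplicial cones.
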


\begin{proof}
By Definition~\ref{def:spline-positive-cone}, the cone is generated by $\mu_1,\dots,\mu_{n-d}$ with nonnegative coefficients. Proposition~\ref{prop:span} says that these measures form a basis of $L_{d,\cX}$. Hence the coordinate map
\[
(\al_1,\dots,\al_{n-d})\longmapsto \sum_{i=1}^{n-d}\al_i\mu_i
\]
is a linear isomorphism from $\RR^{n-d}$ onto $L_{d,\cX}$. Under this isomorphism the cone generated by the basis vectors is precisely the standard closed orthant. The assertion about extremal rays follows because the coordinate rays are the extremal rays of a simplicial cone.
\end{proof}

For the simplex case the moments admit an explicit generating function. Let
\[
V=\{v_1<\cdots<v_{d+1}\}
\]
and let $\mu_V$ denote the normalized spline measure supported on $[v_1,v_{d+1}]$ whose density is the univariate $B$-spline of order $d+1$ with knots $V$.

\begin{proposition}\label{prop:moments}
The generating series of the scaled moments of $\mu_V$ is
\[
\Psi_V(t):=\sum_{j=0}^\infty \binom{j+d}{d} m_j(V)t^j
\;=
\;\frac{1}{(1-v_1t)(1-v_2t)\cdots(1-v_{d+1}t)},
\]
where
\[
m_j(V):=\int_{\RR} \xi^j\,d\mu_V(\xi).
\]
\end{proposition}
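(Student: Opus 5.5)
We realize $\mu_V$ as the pushforward of the uniform probability measure on a $d$-simplex, as in the proof of Proposition~\ref{prop:span}. Take the standard simplex $\Delta_d=\{(s_1,\dots,s_{d+1}): s_i\ge 0,\ \sum s_i=1\}$ with normalized Lebesgue measure $d\sigma$ of total mass one. Map $s\mapsto \xi=\sum_i s_i v_i$. This is an affine image of a volume-one simplex whose projected vertices are $v_1,\dots,v_{d+1}$. By Curry--Schoenberg, the pushforward has as density the normalized $B$-spline of order $d+1$ with knots $V$, so it equals $\mu_V$. This gives the integral representation
\[
m_j(V)=\int_{\Delta_d}\Bigl(\sum_{i=1}^{d+1}s_iv_i\Bigr)^j\,d\sigma(s).
\]

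Next we expand by the multinomial theorem and use the Dirichlet integral
\[
\int_{\Delta_d} s^{\beta}\,d\sigma=\frac{d!\,\beta!}{(|\beta|+d)!},
\]
with $\beta!=\prod_i\beta_i!$. This yields
\[
m_j(V)=\sum_{|\beta|=j}\frac{j!}{\beta!}\,v^\beta\cdot\frac{d!\,\beta!}{(j+d)!}=\binom{j+d}{d}^{-1}h_j(v_1,\dots,v_{d+1}),
\]
where $h_j$ is the complete homogeneous symmetric polynomial. Hence $\binom{j+d}{d}m_j(V)=h_j(V)$. Summing against $t^j$ and using the classical identity $\sum_j h_j t^j=\prod_i(1-v_it)^{-1}$ gives the claimed formula for $\Psi_V(t)$.

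The main obstacle is the normalization bookkeeping. We must check that "normalized" in the statement means total mass one, which the case $j=0$ confirms ($h_0=1$). We must also check the Dirichlet integral constant for the probability-normalized simplex measure. I would verify that constant by computing $\int_{\Delta_d}1\,d\sigma=d!/d!=1$. As a cross-check I would use an alternative route: $\int (1-\xi t)^{-(d+1)}d\mu_V(\xi)=\prod_i(1-v_it)^{-1}$, obtained from the divided-difference (Peano kernel) representation $\int f^{(d)}\,dM=d!\,[v_1,\dots,v_{d+1}]f$. Applying this to $f(\xi)=\xi^{j+d}$ gives $d!\,h_j(V)$ directly. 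That route avoids the simplex realization altogether, and I would mention it as a remark.
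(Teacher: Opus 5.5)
The paper gives no proof of Proposition~\ref{prop:moments}. It is stated as a standard fact and then used, e.g.\ in the proof of Theorem~\ref{th:rational-recurrence}. So there is no argument in the text to compare against, and your argument is a correct, complete proof that supplies the missing justification.

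\begin{itemize}
\item Realizing $\mu_V$ as the pushforward of the uniform probability measure on $\Delta_d$ under $s\mapsto\sum_i s_iv_i$ is exactly the volume-one-simplex identification the paper uses in Proposition~\ref{prop:span}.
\item The Dirichlet constant $d!\,\beta!/(|\beta|+d)!$ is the right one for the probability-normalized measure.
\item The multinomial bookkeeping correctly gives $m_j(V)=h_j(V)/\binom{j+d}{d}$.
\item Your normalization check is the right one. The statement itself forces $m_0(V)=\Psi_V(0)=1$, i.e.\ the mass-one (Curry--Schoenberg) normalization rather than the partition-of-unity one.
\item For $d=0$, where $\mu_V=\delta_{v_1}$ has no density, your integral representation still holds trivially.
\end{itemize}

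Note that this reverses the paper's logical order: you prove Corollary~\ref{cor:mom} first and obtain the proposition by summing against $\sum_j h_jt^j=\prod_i(1-v_it)^{-1}$.

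Your divided-difference cross-check is also valid and somewhat more economical. With $f(\xi)=(1-\xi t)^{-1}$ one has $f^{(d)}(\xi)=d!\,t^d(1-\xi t)^{-(d+1)}$ and $[v_1,\dots,v_{d+1}]f=t^d\prod_i(1-v_it)^{-1}$. The Peano-kernel identity then gives the rational generating function in one line, and it shows why the denominator is a product over the knots.

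The simplex route, by contrast, stays inside the paper's geometric picture of projected simplices. It yields the closed coefficient formula directly, and it is the univariate shadow of the multivariate simplex moment formula in \cite{KSS}.
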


\begin{corollary}\label{cor:mom}
With the notation above,
\[
m_j(V)=\frac{h_j(v_1,\dots,v_{d+1})}{\binom{j+d}{d}},
\]
where $h_j$ is the complete homogeneous symmetric polynomial of degree $j$ in the variables $v_1,\dots,v_{d+1}$.
\end{corollary}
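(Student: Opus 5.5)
The corollary follows from Proposition~\ref{prop:moments} by reading off coefficients, so the plan is to compare the coefficient of $t^j$ on the two sides of the generating-function identity. On the left, the coefficient of $t^j$ in $\Psi_V(t)$ is by definition $\binom{j+d}{d}m_j(V)$. On the right, I expand each factor as a geometric series,
\[
\frac{1}{1-v_kt}=\sum_{a\ge 0} v_k^a t^a,
\]
valid as formal power series in $t$, or analytically for $|t|<1/\max_k|v_k|$. Multiplying the $d+1$ series, the coefficient of $t^j$ is
\[
\sum_{a_1+\cdots+a_{d+1}=j} v_1^{a_1}\cdots v_{d+1}^{a_{d+1}}=h_j(v_1,\dots,v_{d+1}).
\]
This is the standard generating function $\sum_j h_j t^j=\prod_k (1-v_kt)^{-1}$ for complete homogeneous symmetric polynomials.

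Equating coefficients is legitimate because both sides are equal as formal power series. Alternatively, both sides are analytic near $t=0$: the moments satisfy $|m_j(V)|\le \max(|v_1|,|v_{d+1}|)^j$ since $\mu_V$ is a probability measure supported on $[v_1,v_{d+1}]$, so the left series also has a positive radius of convergence. Dividing by $\binom{j+d}{d}\neq 0$ then gives the formula.

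There is no genuine obstacle once Proposition~\ref{prop:moments} is granted. The only points needing care are the normalization, namely that $\mu_V$ has total mass one so that the case $j=0$ gives $m_0=1=h_0$, and the justification for comparing coefficients, which the formal power series argument handles.
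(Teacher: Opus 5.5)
Your proposal is correct and takes the same route the paper intends: the paper gives no separate proof of this corollary, and it follows from Proposition~\ref{prop:moments} by expanding $\prod_{k=1}^{d+1}(1-v_kt)^{-1}=\sum_{j\ge0}h_j(v_1,\dots,v_{d+1})t^j$, comparing coefficients of $t^j$, and dividing by $\binom{j+d}{d}$. Your remarks on justifying the coefficient comparison, formally or analytically, and on the normalization $m_0=1$ are harmless extras beyond what the paper states.
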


These formulas motivate the following definition.

\begin{theorem}[Rational generating function and recurrence]\label{th:rational-recurrence}
Let
\[
\nu=\sum_{i=1}^{n-d}\alpha_i\mu_i\in L_{d,\cX},
\qquad
m_j=\int x^j\,d\nu(x),
\]
and set
\[
F_\nu(t):=\sum_{j\ge 0}\binom{j+d}{d}m_jt^j .
\]
Then
\begin{equation}\label{eq:rationalF}
F_\nu(t)=
\sum_{i=1}^{n-d}
\frac{\alpha_i}{(1-x_it)(1-x_{i+1}t)\cdots(1-x_{i+d}t)}
=\frac{R_\nu(t)}{Q_\cX(t)},
\end{equation}
where
\[
Q_\cX(t)=\prod_{r=1}^{n}(1-x_rt)
\]
and $R_\nu(t)$ is a polynomial of degree at most $n-d-1$. Consequently the normalized moments
\[
c_{d+j}=\binom{j+d}{d}m_j
\]
satisfy, for all $j\ge n-d$,
\begin{equation}\label{eq:recurrence}
c_{d+j}-e_1(\cX)c_{d+j-1}+e_2(\cX)c_{d+j-2}-\cdots+(-1)^ne_n(\cX)c_{d+j-n}=0,
\end{equation}
with the convention $c_0=\cdots=c_{d-1}=0$.
\end{theorem}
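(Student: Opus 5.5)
The plan is to argue by linearity, using Proposition~\ref{prop:moments} for each basis element and then putting everything over a common denominator.

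\textbf{Step 1: linearity and the first equality.} Moments are linear in the measure, so
\[
m_j(\nu)=\sum_i \alpha_i m_j(\mu_i),
\]
and hence $F_\nu=\sum_i\alpha_i\Psi_{V_i}$ with $V_i=\{x_i,\dots,x_{i+d}\}$. Proposition~\ref{prop:moments} gives
\[
\Psi_{V_i}(t)=\prod_{r=i}^{i+d}(1-x_rt)^{-1}
\]
as formal power series (convergent for small $|t|$). This is the first equality in \eqref{eq:rationalF}.

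\textbf{Step 2: common denominator.} Multiply the $i$-th summand by $Q_\cX(t)$. The result is
\[
\alpha_i\prod_{r\notin\{i,\dots,i+d\}}(1-x_rt),
\]
a polynomial of degree $n-d-1$, because $d+1$ of the $n$ factors cancel. Summing over $i$ gives $R_\nu$ with $\deg R_\nu\le n-d-1$.

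\textbf{Step 3: the recurrence.} Expand the identity $Q_\cX(t)F_\nu(t)=R_\nu(t)$ as formal power series. We have
\[
Q_\cX(t)=\sum_{k=0}^n(-1)^ke_k(\cX)t^k .
\]
Writing $F_\nu(t)=\sum_{j\ge0}c_{d+j}t^j$, the coefficient of $t^j$ on the left is
\[
\sum_{k=0}^{n}(-1)^ke_k(\cX)\,c_{d+j-k},
\]
where terms with $j-k<0$ are omitted. Equivalently, they are set to zero. This is consistent with the convention $c_0=\dots=c_{d-1}=0$ only when the indices $d+j-k$ with $j-k<0$ land in $\{0,\dots,d-1\}$. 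For $k>j+d$ the index $d+j-k$ becomes negative, and those terms must also be read as zero. I will state the convention as $c_\ell=0$ for all $\ell<d$; this matches the paper's convention for the indices it mentions and extends it to negative ones. For $j\ge n-d$ the coefficient of $t^j$ in $R_\nu$ vanishes, since $\deg R_\nu\le n-d-1$. This yields \eqref{eq:recurrence}.

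\textbf{Main obstacle.} The only genuine input is Proposition~\ref{prop:moments}, which is taken as given, so the real work is bookkeeping. The one delicate point is the index convention just discussed: the zero padding must correctly encode the truncation of the Cauchy product. I will handle it by defining $c_\ell=0$ for every $\ell<d$ before expanding, rather than only for $0\le\ell\le d-1$.
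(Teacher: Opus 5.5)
Your proposal is correct and follows essentially the same route as the paper: you apply Proposition~\ref{prop:moments} to each $\mu_i$, clear denominators with $Q_\cX$ to get $R_\nu$ of degree at most $n-d-1$, and read off the recurrence from the coefficients of $Q_\cX F_\nu=R_\nu$ in degrees $j\ge n-d$. Your extension of the zero convention to negative indices is harmless but unnecessary, because for $j\ge n-d$ and $0\le k\le n$ one has $d+j-k\ge 0$, so only the paper's convention $c_0=\cdots=c_{d-1}=0$ is ever used.
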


\begin{proof}
The first equality is Proposition~\ref{prop:moments} applied to each consecutive simplex spline. Multiplying by $Q_\cX(t)$ gives
\[
R_\nu(t)=\sum_{i=1}^{n-d}\alpha_i
\prod_{r\notin\{i,i+1,\dots,i+d\}}(1-x_rt),
\]
so $\deg R_\nu\le n-d-1$. The recurrence is the coefficient form of
$Q_\cX(t)F_\nu(t)=R_\nu(t)$ in all degrees at least $n-d$.
\end{proof}

\begin{corollary}[Annihilating polynomial]\label{cor:annihilating}
For generic data in $L_{d,\cX}$ with all $\alpha_i\ne 0$, the monic polynomial
\[
q_\cX(z)=\prod_{r=1}^{n}(z-x_r)
\]
is the minimal annihilating polynomial of the normalized moment sequence $(c_j)$.
Equivalently, no proper divisor of $q_\cX$ gives a recurrence for all sufficiently large $j$.
\end{corollary}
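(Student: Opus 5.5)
The plan is to read the minimal annihilating polynomial off the reduced form of the rational function in Theorem~\ref{th:rational-recurrence}. Put $C(t):=\sum_{k\ge 0}c_kt^k$. With the convention $c_0=\cdots=c_{d-1}=0$ we get $C(t)=t^dF_\nu(t)=t^dR_\nu(t)/Q_\cX(t)$.

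\emph{Step 1 (dictionary).} A monic polynomial $p(z)=z^s+a_1z^{s-1}+\cdots+a_s$ annihilates $(c_k)$ for all sufficiently large $k$ exactly when $\tilde p(t)C(t)$ is a polynomial, where $\tilde p(t)=t^sp(1/t)$ is the reversed polynomial. Write $C=A/B$ in lowest terms with $B(0)=1$. Then the eventual annihilators are exactly those $p$ whose nonzero-root part $\tilde p$ is divisible by $B$. Factors of $z$ only affect finitely many initial terms, so they are irrelevant for recurrences valid for large $j$. Hence the minimal eventual annihilator is the reversal of $B$.

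For this step I would assume, as part of ``generic'', that $0\notin\cX$. Then $Q_\cX$ has degree exactly $n$ and its reversal is $q_\cX$. If some $x_r=0$, the factor $z$ of $q_\cX$ is invisible to eventual recurrences; this is the one point where the statement needs the genericity hypothesis beyond $\alpha_i\neq 0$.

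\emph{Step 2 (reduction to coprimality).} Because $Q_\cX(0)=1$, the factor $t^d$ cannot cancel against $Q_\cX$. So the claim reduces to showing $\gcd(R_\nu,Q_\cX)=1$. Since the $x_r$ are distinct and nonzero, this is equivalent to $R_\nu(1/x_r)\ne 0$ for every $r=1,\dots,n$.

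\emph{Step 3 (evaluation).} Use the explicit formula
\[
R_\nu(t)=\sum_{i=1}^{n-d}\alpha_i\prod_{r'\notin\{i,\dots,i+d\}}(1-x_{r'}t)
\]
from the proof of Theorem~\ref{th:rational-recurrence}. At $t=1/x_r$, every term whose block $\{i,\dots,i+d\}$ omits $r$ vanishes. Hence
\[
R_\nu(1/x_r)=\sum_{i:\ r\in\{i,\dots,i+d\}}\alpha_i\prod_{r'\notin\{i,\dots,i+d\}}\Bigl(1-\frac{x_{r'}}{x_r}\Bigr).
\]
This is a linear form in $\alpha$ whose coefficients are all nonzero, because the knots are distinct. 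So it is not identically zero, and its zero set is a hyperplane.

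At the endpoints only one term survives. For $r=1$ only $i=1$ contributes, and for $r=n$ only $i=n-d$ contributes. So $\alpha_1\ne0$ and $\alpha_{n-d}\ne0$ already give nonvanishing there. For interior $r$ the nonvanishing holds off a finite union of hyperplanes in $\alpha$-space, which is the meaning I would give to ``generic''. With this, $R_\nu/Q_\cX$ is reduced, and Step 1 yields that $q_\cX$ is minimal.

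\emph{Main obstacle.} The delicate part is the precise form of the genericity hypothesis. The condition ``all $\alpha_i\ne0$'' alone does not obviously rule out cancellation at interior knots, since $R_\nu(1/x_r)$ is a sum of up to $d+1$ terms of varying sign. I would therefore state genericity explicitly as the complement of these $n-2$ hyperplanes, together with $0\notin\cX$. I would also check on a small example, such as $d=1$, $n=3$, that such cancellation can really occur for suitable nonzero $\alpha$.
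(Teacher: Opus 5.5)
Your proposal is correct and follows the same route as the paper. The paper also argues that a factor $(1-x_rt)$ can cancel in $F_\nu=R_\nu/Q_\cX$ only if $R_\nu(1/x_r)=0$, calls this a nontrivial linear condition on the amplitudes, and concludes for generic $\alpha$. Your explicit evaluation of $R_\nu(1/x_r)$ (all coefficients nonzero, a single surviving term at $r=1$ and $r=n$) supplies the nontriviality the paper only asserts. Your caveat $0\notin\cX$ is genuinely needed, because for $x_r=0$ the factor $z$ of $q_\cX$ gives no constraint on eventual recurrences; the paper's proof assumes this tacitly when it evaluates at $1/x_r$.
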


\begin{proof}
By \eqref{eq:rationalF}, cancellation of a factor $(1-x_rt)$ can occur only if
$R_\nu(1/x_r)=0$. This is a nontrivial linear condition on the amplitudes. For generic amplitudes it does not occur at any of the $n$ nodes. Hence the denominator of $F_\nu$ is exactly $Q_\cX$, and the corresponding recurrence has minimal characteristic polynomial $q_\cX$.
\end{proof}

\begin{definition}
Fix integers $d\ge 0$ and $n\ge d+1$. The \emph{$d$-th univariate Prony system with $n$ nodes} is the system
\begin{equation}\label{eq:Prony}
\sum_{i=1}^{n-d} \al_i h_j(x_i,x_{i+1},\dots,x_{i+d}) = \binom{j+d}{d}m_j,
\qquad j=0,\dots,2n-d-1,
\end{equation}
in the unknown amplitudes $\al_1,\dots,\al_{n-d}$ and nodes $x_1,\dots,x_n$.
\end{definition}

For $d=0$ this reduces to the classical Prony system \eqref{eq:PronyOr}.

To recover the nodes from the moments, define the normalized sequence
\[
C_M=(c_0,c_1,\dots,c_{2n-1})
\]
attached to
\[
M=(m_0,m_1,\dots,m_{2n-d-1})
\]
by setting
\[
c_0=c_1=\cdots=c_{d-1}=0, \qquad c_{d+i}=\binom{i+d}{d}m_i \quad (0\le i\le 2n-d-1).
\]
For reconstruction from this finite segment we use the rectangular Hankel matrix
\[
H_M=\bigl(c_{i+j}\bigr)_{0\le i\le n-1,\,0\le j\le n},
\]
which only involves the available entries $c_0,\dots,c_{2n-1}$.

\begin{proposition}\label{prop:nodes}
For a generic moment vector $M$ in the image of the higher Prony map, the rectangular matrix $H_M$ has one-dimensional kernel. If $(u_0,\dots,u_n)$ spans its kernel and
\[
p_M(t)=u_nt^n+u_{n-1}t^{n-1}+\cdots+u_0,
\]
then the roots of $p_M$ coincide with the nodes $x_1,\dots,x_n$ of \eqref{eq:Prony}.
\end{proposition}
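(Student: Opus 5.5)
The plan is to show two things: the coefficient vector of $q_\cX(z)=\prod_{r=1}^n(z-x_r)$ lies in $\ker H_M$, and for generic data the rank of $H_M$ is exactly $n$. Together these give a one-dimensional kernel spanned by that vector, so $p_M$ is a nonzero multiple of $q_\cX$ and its roots are the nodes.

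\emph{Step 1: the kernel contains $q_\cX$.} Pad the normalized moments with the zeros $c_0=\dots=c_{d-1}=0$ and form the series
\[
G(t)=\sum_{k\ge0}c_kt^k=t^dF_\nu(t).
\]
By Theorem~\ref{th:rational-recurrence},
\[
Q_\cX(t)G(t)=t^dR_\nu(t),
\]
and the right-hand side is a polynomial of degree at most $n-1$. Hence every coefficient of $t^k$ with $k\ge n$ in $Q_\cX G$ vanishes. Write $q_\cX(z)=\sum_{k=0}^n a_kz^k$. The coefficient of $t^{i+n}$ in $Q_\cX G$ is then $\sum_{k=0}^n a_kc_{i+k}$. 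For $0\le i\le n-1$ this only involves $c_0,\dots,c_{2n-1}$, and it vanishes because $i+n\ge n$. This is exactly the statement $H_M(a_0,\dots,a_n)^T=0$.

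\emph{Step 2: rank $n$ for generic data.} It suffices to show that the square Hankel block $(c_{i+j})_{0\le i,j\le n-1}$ is nonsingular. I take the nodes generic, in particular all $x_r\neq0$, so that $\deg Q_\cX=n$. Since $\deg(t^dR_\nu)<\deg Q_\cX$ and the nodes are distinct, partial fractions give
\[
G(t)=\sum_{r=1}^n\frac{\beta_r}{1-x_rt}, \qquad\text{so}\qquad c_k=\sum_r\beta_rx_r^k \text{ for all } k\ge0,
\]
including the padded zeros. Then the square block factors as $V^T\operatorname{diag}(\beta)V$, where $V=(x_r^j)$ is a Vandermonde matrix. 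It is therefore nonsingular as soon as all $\beta_r\ne0$.

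\emph{Step 3: the residues are nonzero (the main point).} Up to the nonzero factor $\prod_{s\ne r}(1-x_s/x_r)^{-1}$, the residue $\beta_r$ equals $x_r^{-d}R_\nu(1/x_r)$. Using the explicit formula for $R_\nu$ from the proof of Theorem~\ref{th:rational-recurrence},
\[
R_\nu(1/x_r)=\sum_{i:\,r\in\{i,\dots,i+d\}}\alpha_i\prod_{s\notin\{i,\dots,i+d\}}\Bigl(1-\frac{x_s}{x_r}\Bigr).
\]
This is a linear form in the $\alpha_i$ whose coefficients are nonzero because the nodes are distinct. So it is not identically zero, and it vanishes only on a proper hyperplane; this is precisely the genericity already used in Corollary~\ref{cor:annihilating}.

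Avoiding these finitely many hyperplanes, together with the condition $x_r\ne0$, defines a dense open set of parameters. Its image is a generic set of moment vectors in the image of the higher Prony map, which is the sense of genericity I would use. On this set $\operatorname{rank}H_M=n$, so the kernel is one-dimensional and spanned by the coefficient vector of $q_\cX$. Hence $p_M=u_n\,q_\cX$ with $u_n\ne0$, and the roots of $p_M$ are $x_1,\dots,x_n$.

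I expect Step~3, together with pinning down the right notion of genericity, to be the only delicate part; Steps~1 and~2 are routine bookkeeping.
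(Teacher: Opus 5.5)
Your proof is correct and has the same outline as the paper's. The recurrence of Theorem~\ref{th:rational-recurrence} puts the coefficient vector of $q_\cX$ into $\ker H_M$. The non-cancellation condition $R_\nu(1/x_r)\neq 0$ behind Corollary~\ref{cor:annihilating} is then what makes the kernel one-dimensional.

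You differ from the paper in how that last step is justified. The paper says only that the minimal recurrence has order $n$, so the kernel is generically one-dimensional. Minimality, however, concerns recurrences valid for the whole infinite sequence, while a vector in $\ker H_M$ satisfies only the $n$ equations built from $c_0,\dots,c_{2n-1}$. The paper is therefore tacitly relying on the classical Kronecker--Frobenius fact that a Hankel sequence of rank $n$ has a nonvanishing leading $n\times n$ Hankel minor.

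Your partial-fraction expansion $c_k=\sum_r\beta_rx_r^k$ (valid including the padded zeros) and the factorization $(c_{i+j})_{0\le i,j\le n-1}=V^T\operatorname{diag}(\beta)V$ prove exactly this in the present setting. They give the explicit value $\det(V)^2\prod_r\beta_r$ for that minor. They also make the genericity condition precise: all residues $\beta_r$ are nonzero, which is the same as $R_\nu(1/x_r)\neq0$. Your route is a bit longer but self-contained and rigorous on the finite segment; the paper's is shorter but leaves the finite-rank step implicit.

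Your extra assumption $x_r\neq0$ is harmless and could be dropped. With the convention $0^0=1$, the expansion $c_k=\sum_r\beta_rx_r^k$ persists when a node vanishes, so the Vandermonde factorization still applies.
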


\begin{proof}
For data coming from the higher Prony map, Theorem~\ref{th:rational-recurrence} gives a recurrence whose characteristic polynomial is $q_\cX(z)=\prod_{r=1}^n(z-x_r)$. In the generic case Corollary~\ref{cor:annihilating} says that this recurrence is minimal. The equations expressing the recurrence for the available moment segment are exactly the equations saying that the coefficient vector $(u_0,\dots,u_n)$ lies in the kernel of the rectangular Hankel matrix $H_M$. Since the minimal recurrence has order $n$, this kernel is generically one-dimensional. The kernel polynomial is therefore proportional to $q_\cX$, and its roots are precisely the nodes.
\end{proof}

Once the nodes are known and are distinct, the amplitudes can be recovered by solving the linear system \eqref{eq:Prony}. The relevant coefficient matrix is
\[
A_{d,n}(\cX):=\bigl(h_{r}(x_i,x_{i+1},\dots,x_{i+d})\bigr)_{\substack{0\le r\le n-d-1\\ 1\le i\le n-d}}.
\]
Its determinant admits a closed factorization.

\begin{proposition}\label{prop:detA}
For every strictly increasing $\cX=\{x_1<\cdots<x_n\}$ one has
\[
\det A_{d,n}(\cX)
=
\prod_{\substack{1\le i<j\le n\\ j-i\ge d+1}} (x_j-x_i).
\]
In particular, $A_{d,n}(\cX)$ is invertible whenever the nodes $x_1,\dots,x_n$ are pairwise distinct.
\end{proposition}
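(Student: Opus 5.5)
The plan is to identify each entry of $A_{d,n}(\cX)$ as a divided difference of a monomial, and then to read $\det A_{d,n}(\cX)$ off a triangular change of basis applied to the Vandermonde determinant. The starting point is the classical identity
\[
h_r(y_0,\dots,y_d)=[y_0,\dots,y_d]\,t^{r+d},
\]
where $[y_0,\dots,y_d]f=\sum_{k}f(y_k)/\prod_{l\ne k}(y_k-y_l)$ is the divided difference. It follows by comparing coefficients in the generating function of Proposition~\ref{prop:moments}, or by the standard partial-fraction expansion of $\prod_k(1-y_kt)^{-1}$. Thus column $i$ of $A_{d,n}(\cX)$ is the linear functional $\lambda_{d+i}:=[x_i,\dots,x_{i+d}]$ evaluated on the monomials $t^{d},t^{d+1},\dots,t^{n-1}$.

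Next I complete these $n-d$ functionals to $n$ functionals on the space $\RR[t]_{<n}$ of polynomials of degree $<n$. For $k=1,\dots,d$ I add the Newton divided differences $\lambda_k:=[x_1,\dots,x_k]$. Let $B$ be the $n\times n$ matrix with entries $\lambda_k(t^s)$, $0\le s\le n-1$, $1\le k\le n$. Since $\lambda_{d+i}$ annihilates all polynomials of degree $<d$, the matrix $B$ is block triangular with respect to the splitting $s<d$ versus $s\ge d$. Hence
\[
\det B=\det\bigl(\lambda_k(t^s)\bigr)_{0\le s,\,k-1<d}\cdot\det A_{d,n}(\cX).
\]
The first factor is triangular with ones on the diagonal, because $[x_1,\dots,x_k]t^{s}=0$ for $s<k-1$ and $=1$ for $s=k-1$. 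So $\det B=\det A_{d,n}(\cX)$.

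On the other hand, $B=V\,T$, where $V=(x_k^s)$ is the Vandermonde matrix and $T$ expresses each $\lambda_k$ in terms of the point evaluations at $x_1,\dots,x_n$. The key observation is that the largest-index node used by $\lambda_k$ is exactly $x_k$: for $k\le d$ this holds by construction, and for $k=d+i$ the last node of $[x_i,\dots,x_{i+d}]$ is $x_{i+d}=x_k$. Consequently $T$ is triangular. Its diagonal entry is the coefficient of $f(x_k)$ in $\lambda_k f$, namely
\[
\Bigl(\prod_{l\in S_k}(x_k-x_l)\Bigr)^{-1},\qquad S_k=\{\max(1,k-d),\dots,k-1\}.
\]
Therefore
\[
\det A_{d,n}(\cX)=\prod_{i<j}(x_j-x_i)\Big/\prod_{k}\prod_{l\in S_k}(x_k-x_l).
\]
The pairs $(l,k)$ with $l\in S_k$ are exactly the pairs $l<k$ with $k-l\le d$. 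Cancelling these leaves $\prod_{j-i\ge d+1}(x_j-x_i)$.

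The main obstacle is bookkeeping rather than substance. I need to orient the matrices consistently (rows as monomials, columns as functionals, with the same convention as in the definition of $A_{d,n}(\cX)$) so that no stray sign appears. I also need to check that the triangularity of $T$ runs in the same direction as the ordering used in the Vandermonde factor $\prod_{i<j}(x_j-x_i)$. If a sign does appear, I would track it by comparing with the case $d=0$, where $A_{0,n}$ is the Vandermonde matrix itself. Invertibility for pairwise distinct nodes is then immediate from the product formula.
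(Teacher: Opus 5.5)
Your proof is correct, and it takes a genuinely different route from the paper's.

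\textbf{The paper's route.} The paper argues by induction on $d$. It replaces each column $C_i$ by $C_i-C_{i+1}$ and uses $h_r(x_i,\dots,x_{i+d})-h_r(x_{i+1},\dots,x_{i+d+1})=(x_i-x_{i+d+1})h_{r-1}(x_i,\dots,x_{i+d+1})$, which makes the first row $(0,\dots,0,1)$. Expanding along that row gives $\det A_{d,n}(\cX)=\prod_{i=1}^{n-d-1}(x_{i+d+1}-x_i)\det A_{d+1,n}(\cX)$. This peels off the factors with $j-i=d+1,d+2,\dots$ one diagonal at a time, until $A_{n-1,n}(\cX)=(1)$. That column operation is the divided-difference recurrence applied once.

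\textbf{Your route.} You unwind all the divided differences to point evaluations in one step, via the unitriangular Newton bordering and the factorization $B=VT$. Your bookkeeping is sound, so the sign worry is unfounded:
\begin{itemize}
\item with $V=(x_l^s)$ one has $\det V=\prod_{i<j}(x_j-x_i)$;
\item $T$ is upper triangular, since $\lambda_k$ only uses nodes $x_l$ with $l\le k$;
\item $A_{d,n}(\cX)$ sits in $B$ in its defining orientation.
\end{itemize}

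\textbf{What each buys.} Your argument is non-inductive and explains the formula. The pairs with $j-i\le d$ disappear because they are exactly the denominators of the leading coefficients of the divided differences, and you avoid Laplace-expansion signs entirely. The paper's argument is division-free, so it gives the product formula as a polynomial identity even for coincident nodes. Yours needs distinct nodes, which is enough for the statement as posed.

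One minor point: the identity $h_r(y_0,\dots,y_d)=[y_0,\dots,y_d]\,t^{r+d}$ follows directly from your partial-fraction expansion. Deriving it from Proposition~\ref{prop:moments} would also require the Peano-kernel description of $B$-spline moments as divided differences.
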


\begin{proof}
Set $m:=n-d$. For $1\le i\le m$, let $C_i$ denote the $i$-th column of $A_{d,n}(\cX)$, so that the generating series of its entries is
\[
\sum_{r\ge 0} h_r(x_i,\dots,x_{i+d})t^r
=
\frac{1}{\prod_{k=i}^{i+d}(1-x_kt)}.
\]
For $i=1,\dots,m-1$ replace $C_i$ by $C_i-C_{i+1}$. Since elementary column operations do not change the determinant, this preserves $\det A_{d,n}(\cX)$. Using the generating-series identity
\[
\frac{1}{\prod_{k=i}^{i+d}(1-x_kt)}-\frac{1}{\prod_{k=i+1}^{i+d+1}(1-x_kt)}
=
\frac{(x_i-x_{i+d+1})t}{\prod_{k=i}^{i+d+1}(1-x_kt)},
\]
we obtain, coefficientwise,
\[
 h_r(x_i,\dots,x_{i+d})-h_r(x_{i+1},\dots,x_{i+d+1})
 =(x_i-x_{i+d+1})h_{r-1}(x_i,\dots,x_{i+d+1}),
\]
with the convention $h_{-1}=0$.
Hence the transformed matrix has first row $(0,\dots,0,1)$, because $h_0=1$ in every column, and for $r\ge 1$ its $(r,i)$-entry in column $i<m$ equals
\[
(x_i-x_{i+d+1})h_{r-1}(x_i,\dots,x_{i+d+1}).
\]
Expanding the determinant along the first row gives the sign $(-1)^{m+1}$.  Therefore
\[
\det A_{d,n}(\cX)
=
(-1)^{m+1}\Bigl(\prod_{i=1}^{m-1}(x_i-x_{i+d+1})\Bigr)
\det A_{d+1,n}(\cX)
=
\Bigl(\prod_{i=1}^{m-1}(x_{i+d+1}-x_i)\Bigr)
\det A_{d+1,n}(\cX),
\]
where $A_{d+1,n}(\cX)$ is the analogous $(m-1)\times(m-1)$ matrix with entries
\[
\bigl(h_r(x_i,\dots,x_{i+d+1})\bigr)_{\substack{0\le r\le m-2\\ 1\le i\le m-1}}.
\]
Iterating this recursion from $d$ up to $n-1$, and observing that $A_{n-1,n}(\cX)=(1)$, we obtain
\[
\det A_{d,n}(\cX)
=
\prod_{q=d}^{n-2}\prod_{i=1}^{n-q-1}(x_{i+q+1}-x_i).
\]
Relabelling $j=i+q+1$ yields exactly
\[
\det A_{d,n}(\cX)
=
\prod_{\substack{1\le i<j\le n\\ j-i\ge d+1}} (x_j-x_i),
\]
as claimed.
\end{proof}

\begin{theorem}\label{th:moment-positivity-fixed-nodes}
Fix $d\ge 0$ and a strictly increasing knot set $\cX=\{x_1<\cdots <x_n\}$. Let
\[
b(M)=\left(\binom{d}{d}m_0,\binom{d+1}{d}m_1,
\dots,
\binom{n-1}{d}m_{n-d-1}\right)^T.
\]
For moment data with these fixed nodes, the representing measure belongs to the positive simplicial spline cone $\widetilde L_{d,\cX}^+$ if and only if
\[
\alpha(M):=A_{d,n}(\cX)^{-1}b(M)
\]
has nonnegative coordinates. Equivalently, the inequalities defining the moment cone with fixed nodes are
\[
\ell_i(M):=e_i^T A_{d,n}(\cX)^{-1}b(M)\ge 0,
\qquad i=1,\dots,n-d.
\]
Thus, for fixed $\cX$, positivity is given by $n-d$ explicit linear inequalities in the first $n-d$ normalized moments.
\end{theorem}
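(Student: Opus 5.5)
The plan is to reduce the positivity question to the linear system relating amplitudes and the first $n-d$ normalized moments. The fixed-node case needs no nonlinear reconstruction, so everything follows from Proposition~\ref{prop:detA} and Theorem~\ref{th:positive-cone}.

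\emph{Step 1: moments are linear in the amplitudes.} Let $\nu=\sum_{i=1}^{n-d}\alpha_i\mu_i\in L_{d,\cX}$ be the representing measure; it exists and is unique by Proposition~\ref{prop:span}. By Corollary~\ref{cor:mom} and linearity of integration,
\[
\binom{j+d}{d}m_j(\nu)=\sum_{i=1}^{n-d}\alpha_i\,h_j(x_i,\dots,x_{i+d}),\qquad j\ge 0 .
\]
These identities are just the coefficients of $t^j$ in \eqref{eq:rationalF}. Restricting to $j=0,\dots,n-d-1$ and noting $\binom{j+d}{d}$ is the $j$-th entry of $b(M)$, we get exactly the matrix identity $A_{d,n}(\cX)\,\alpha=b(M)$, where $\alpha=(\alpha_1,\dots,\alpha_{n-d})^T$.

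\emph{Step 2: invert.} By Proposition~\ref{prop:detA}, $\det A_{d,n}(\cX)=\prod_{j-i\ge d+1}(x_j-x_i)$. This is nonzero because the nodes are strictly increasing. Hence $\alpha=A_{d,n}(\cX)^{-1}b(M)=\alpha(M)$. So the amplitudes are uniquely determined by the first $n-d$ normalized moments, and in particular the representing measure in $L_{d,\cX}$ is unique given $M$. The higher moments are then forced by the recurrence of Theorem~\ref{th:rational-recurrence}; they play no role in the criterion.

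\emph{Step 3: apply the orthant description.} By Theorem~\ref{th:positive-cone}, $\nu\in\widetilde L_{d,\cX}^+$ if and only if every coordinate $\alpha_i\ge 0$ in the basis $\mu_1,\dots,\mu_{n-d}$. Since that basis is linearly independent, the coordinates are unique and equal $\alpha(M)$. Therefore membership in the cone is equivalent to $\ell_i(M)=e_i^TA_{d,n}(\cX)^{-1}b(M)\ge 0$ for $i=1,\dots,n-d$. These are $n-d$ linear forms in $m_0,\dots,m_{n-d-1}$, which gives the final sentence of the statement.

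The only step needing care is the bookkeeping in Step~1: checking that the row index $r$ of $A_{d,n}(\cX)$ matches the moment index $j$, and that the binomial weights in $b(M)$ are exactly those of Corollary~\ref{cor:mom}. I expect no genuine obstacle beyond this, since invertibility is already supplied by Proposition~\ref{prop:detA}.
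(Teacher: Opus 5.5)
Your proposal is correct and follows essentially the same route as the paper: write the first $n-d$ moment equations as $A_{d,n}(\cX)\alpha=b(M)$, invert using the nonvanishing determinant of Proposition~\ref{prop:detA}, and read off cone membership from the orthant description in Theorem~\ref{th:positive-cone}. Your Step~1 only makes explicit, via Corollary~\ref{cor:mom}, what the paper simply cites as ``the first $n-d$ equations of the Prony system'' (one small wording slip: the $j$-th entry of $b(M)$ is $\binom{j+d}{d}m_j$, not $\binom{j+d}{d}$).
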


\begin{proof}
For fixed nodes, the first $n-d$ equations of the Prony system are
\[
A_{d,n}(\cX)\alpha=b(M).
\]
By Proposition~\ref{prop:detA}, the matrix $A_{d,n}(\cX)$ is invertible whenever the nodes are pairwise distinct. Hence the amplitudes are uniquely determined by
$\alpha(M)=A_{d,n}(\cX)^{-1}b(M)$. By Theorem~\ref{th:positive-cone}, membership in $\widetilde L_{d,\cX}^+$ is equivalent to coordinatewise nonnegativity of this vector. Since $A_{d,n}(\cX)$ is fixed, each coordinate of $\alpha(M)$ is a linear form in $m_0,\dots,m_{n-d-1}$.
\end{proof}

\begin{corollary}\label{cor:moment-positivity-reconstructed}
For a generic real moment vector $M=(m_0,\dots,m_{2n-d-1})$ in the image of the higher Prony map, the following conditions are necessary and sufficient for the reconstructed measure to lie in the positive simplicial spline cone:
\begin{enumerate}
    \item the kernel polynomial $p_M$ of Proposition~\ref{prop:nodes} has $n$ distinct real roots $x_1<\cdots <x_n$;
    \item after forming $A_{d,n}(\cX)$ from these roots, the vector
    \[
    \alpha(M)=A_{d,n}(\cX)^{-1}
    \left(\binom{d}{d}m_0,\binom{d+1}{d}m_1,
    \dots,
    \binom{n-1}{d}m_{n-d-1}\right)^T
    \]
    has nonnegative coordinates.
\end{enumerate}
Strict positivity of all coordinates characterizes the relative interior of the cone.
\end{corollary}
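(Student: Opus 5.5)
The plan is to combine Proposition~\ref{prop:nodes}, Proposition~\ref{prop:detA} and Theorem~\ref{th:moment-positivity-fixed-nodes}, reading ``reconstructed measure'' as the measure produced by the two-step algorithm: first the nodes, then the amplitudes.

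\emph{Step 1: the nodes.} Since $M$ is generic in the image of the higher Prony map, we may write $M$ as the moment vector of some $\nu=\sum_{i=1}^{n-d}\al_i\mu_i$ with nodes $\cX'=\{x'_1,\dots,x'_n\}$ and all $\al_i\ne 0$. Here the nodes and amplitudes may a priori be complex, since the image of the complexified map is meant. Proposition~\ref{prop:nodes} shows that $H_M$ has a one-dimensional kernel, and that the roots of $p_M$ are exactly $x'_1,\dots,x'_n$. Genericity is used precisely so that Corollary~\ref{cor:annihilating} applies. Consequently the node set $\cX$ of the reconstructed measure is well defined, namely the root set of $p_M$.

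\emph{Necessity.} Suppose the reconstructed measure lies in $\widetilde L_{d,\cX}^+$ for a strictly increasing real $\cX$. Then $\cX$ is the root set of $p_M$, so condition (1) holds. Its amplitudes solve the first $n-d$ equations of \eqref{eq:Prony}, which by Theorem~\ref{th:moment-positivity-fixed-nodes} forces them to equal $\alpha(M)$. Membership in the cone then gives $\alpha(M)\ge 0$ by Theorem~\ref{th:positive-cone}, which is condition (2).

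\emph{Sufficiency.} Assume (1) and (2). Proposition~\ref{prop:detA} makes $A_{d,n}(\cX)$ invertible, so $\alpha(M)$ is defined. We then need the measure $\sum_i\alpha_i(M)\mu_i$ to reproduce all $2n-d$ moments, not only the first $n-d$. This is where the main obstacle lies.

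I would handle it through uniqueness of amplitudes for fixed nodes. The representing data $(\cX',\al)$ from Step 1 satisfy $\cX'=\cX$, and $\al$ solves the same square system $A_{d,n}(\cX)\al=b(M)$. Hence $\al=\alpha(M)$, and the representing measure coincides with $\sum_i\alpha_i(M)\mu_i$. In particular it matches all moments, and it lies in $\widetilde L_{d,\cX}^+$ by Theorem~\ref{th:positive-cone}.

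An alternative route, if one wants to avoid leaning on the original representation, is Theorem~\ref{th:rational-recurrence}. The first $n-d$ normalized moments, together with $c_0=\dots=c_{d-1}=0$, fix $R_\nu$. The kernel relation of $H_M$ then propagates the recurrence \eqref{eq:recurrence} to the remaining entries $c_{n},\dots,c_{2n-1}$, so these are determined as well.

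\emph{Relative interior.} Finally, Theorem~\ref{th:positive-cone} identifies $\widetilde L_{d,\cX}^+$ with the closed orthant via the linear isomorphism $\al\mapsto\sum\al_i\mu_i$. The relative interior of the orthant is the open orthant, so strict positivity of $\alpha(M)$ characterizes the relative interior.
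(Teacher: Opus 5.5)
Your proposal is correct and follows the same route as the paper. The paper recovers the nodes from Proposition~\ref{prop:nodes}, gets unique amplitudes from the invertibility in Proposition~\ref{prop:detA}, and reads off membership and the relative interior from the orthant description in Theorem~\ref{th:positive-cone}.

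Your extra check that $\sum_i\alpha_i(M)\mu_i$ reproduces all $2n-d$ moments is sound, and the paper leaves it implicit. The recurrence version is the cleaner of your two arguments. The uniqueness version compares amplitude vectors, so it tacitly assumes the representing parameters list the nodes in increasing order; comparing the two measures inside $L_{d,\cX}$ instead removes that assumption.
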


\begin{proof}
For generic data, Proposition~\ref{prop:nodes} reconstructs the nodes as the roots of $p_M$. If these roots are distinct and real, Proposition~\ref{prop:detA} gives the unique amplitudes by the displayed inverse formula. The result then follows directly from Theorem~\ref{th:positive-cone}. If one of the coordinates is zero, the measure lies on the corresponding boundary face; if all are positive, it lies in the relative interior.
\end{proof}

\begin{theorem}[Generic reconstruction for the higher Prony system]\label{th:generic-reconstruction}
There is a Zariski open dense subset of the parameter space with distinct real nodes and nonzero amplitudes on which the map
\[
(x_1,\dots,x_n;\alpha_1,\dots,\alpha_{n-d})
\longmapsto
(m_0,\dots,m_{2n-d-1})
\]
is injective up to the ordering convention $x_1<\cdots<x_n$. On this open set the inverse is obtained by the following finite procedure:
\begin{enumerate}
    \item form the normalized Hankel matrix $H_M$ and recover the kernel polynomial $p_M$;
    \item take its roots as the nodes;
    \item solve the square linear system $A_{d,n}(\cX)\alpha=b(M)$ for the amplitudes.
\end{enumerate}
Moreover, after restricting to real data for which $p_M$ has simple real roots, positivity in the simplicial spline cone is decided by the $n-d$ inequalities of Theorem~\ref{th:moment-positivity-fixed-nodes}.
\end{theorem}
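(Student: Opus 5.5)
The plan is to define the open set explicitly and then chain together Proposition~\ref{prop:nodes}, Proposition~\ref{prop:detA} and Theorem~\ref{th:moment-positivity-fixed-nodes}. Let $U$ be the set of parameters $(x_1,\dots,x_n;\alpha_1,\dots,\alpha_{n-d})$ such that three conditions hold. First, the nodes are pairwise distinct, i.e. $\prod_{r<s}(x_s-x_r)\ne 0$. Second, all $\alpha_i\ne 0$. Third, for every node $x_r$ we have $R_\nu(1/x_r)\ne 0$ when $x_r\ne 0$, with the natural modification (no cancellation at infinity) when $x_r=0$.

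By the explicit formula for $R_\nu$ in the proof of Theorem~\ref{th:rational-recurrence}, each condition $R_\nu(1/x_r)\ne0$ becomes polynomial in the parameters after clearing denominators. It is not identically violated: the summand with index $i$ containing $r$ in a window where the other windows contribute zero shows that the corresponding linear form in $\alpha$ is nonzero. Hence $U$ is Zariski open, and it is nonempty, hence dense, in the parameter space.

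I also want to add the condition that $H_M$ has one-dimensional kernel. The approach is to write the rank condition as the nonvanishing of some $n\times n$ minor of $H_M$. To see this minor is not identically zero, I would factor $H_M$ through the partial-fraction form of $F_\nu$. Since $F_\nu=R_\nu/Q_\cX$ with $\deg R_\nu<n$ and no cancellation, the sequence $c_j$ is a sum $\sum_r \beta_r x_r^{j}$ plus a correction supported in low indices. So the $n\times n$ Hankel block $(c_{i+j})_{0\le i,j\le n-1}$ should be a Vandermonde-type product with all $\beta_r\ne0$, hence nonsingular. This is the step I expect to be the main obstacle. The padding $c_0=\dots=c_{d-1}=0$ and the fact that the recurrence only holds for $j\ge n-d$ mean the low-index entries are not pure exponential sums. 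I would handle this by checking that the kernel equations of $H_M$ only use recurrences in the valid range, and by exhibiting one explicit parameter point where the minor is nonzero; by irreducibility of the parameter space, that suffices for density.

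On $U$ the reconstruction goes in three steps. First, Proposition~\ref{prop:nodes} shows the kernel of $H_M$ is spanned by the coefficients of $q_\cX$, so the roots of $p_M$ are exactly $x_1,\dots,x_n$. Second, the ordering convention fixes the labelling. Third, Proposition~\ref{prop:detA} makes $A_{d,n}(\cX)$ invertible, so the first $n-d$ equations of \eqref{eq:Prony} determine $\alpha$ uniquely.

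Injectivity follows from this. If two parameter points in $U$ give the same $M$, then they give the same $H_M$, hence the same kernel polynomial and the same ordered nodes, and then the same $\alpha$ by solving the identical linear system. The resulting procedure is therefore a left inverse of the map on $U$.

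The final positivity clause is then immediate from Theorem~\ref{th:moment-positivity-fixed-nodes} and Corollary~\ref{cor:moment-positivity-reconstructed}. Once $p_M$ has simple real roots, those roots are $\cX$, and membership in $\widetilde L^+_{d,\cX}$ is equivalent to $\ell_i(M)\ge0$ for $i=1,\dots,n-d$.
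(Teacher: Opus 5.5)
Your proposal is correct and follows the paper's route: the paper's proof chains Proposition~\ref{prop:nodes}, Proposition~\ref{prop:detA} and Corollary~\ref{cor:moment-positivity-reconstructed}, and you do the same while also making the open set $U$ explicit. The step you flag as the main obstacle is harmless, because $\deg(t^dR_\nu)\le n-1<n$ makes $\sum_{k\ge0}c_kt^k=t^dR_\nu(t)/Q_\cX(t)$ a proper rational function, so $c_k=\sum_r\beta_rx_r^k$ holds for all $k\ge0$ with no low-index correction (the padding zeros come out automatically), and hence $(c_{i+j})_{0\le i,j\le n-1}=V^T\operatorname{diag}(\beta)V$ with $V=(x_r^j)$ is invertible on $U$, where distinct nodes and $\beta_r\ne0$ (your third condition) hold, so your extra rank condition is redundant.
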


\begin{proof}
The first step is Proposition~\ref{prop:nodes}. For distinct nodes, Proposition~\ref{prop:detA} proves that $A_{d,n}(\cX)$ is invertible, hence the amplitudes are uniquely recovered. The positivity statement is Corollary~\ref{cor:moment-positivity-reconstructed}.
\end{proof}

We now turn to the algebraic relations among directional moments.

\begin{definition}
Let $\mathcal{L}_k(d,n)\subset \PP^k$ denote the \emph{directional moment variety of order $k$ for $d$-dimensional polytopes with $n$ vertices}. Its homogeneous prime ideal in $\RR[m_0,\dots,m_k]$ is denoted by $L_k(d,n)$.
\end{definition}

\begin{theorem}\label{th:directional-hankel}
The prime ideal $L_k(d,n)$ is generated by the maximal minors of the Hankel matrix
\[
H(m_0,\dots,m_k)=
\left(
\begin{array}{ccccccc}
c_0 & c_1 & \cdots & c_n & c_{n+1} & \cdots & c_{k+d-n} \\
c_1 & c_2 & \cdots & c_{n+1} & c_{n+2} & \cdots & c_{k+d-n+1} \\
\vdots & \vdots & & \vdots & \vdots & & \vdots \\
c_n & c_{n+1} & \cdots & c_{2n} & c_{2n+1} & \cdots & c_{k+d}
\end{array}
\right),
\]
where
\[
c_0=\cdots=c_{d-1}=0, \qquad c_{i+d}=\binom{i+d}{d}m_i \quad (i=0,\dots,k).
\]
\end{theorem}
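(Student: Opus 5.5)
We plan to reduce the statement to the classical description of the Hankel determinantal variety: the variety of sequences $(c_0,\dots,c_N)$, $N=k+d$, whose $(n+1)\times(N-n+1)$ Hankel matrix has rank at most $n$. This variety is irreducible, and its prime ideal is generated by the maximal minors of that Hankel matrix. This is a theorem of Gruson--Peskine and Eisenbud (Hankel ideals are prime and the maximal minors generate). The directional moment variety $\mathcal{L}_k(d,n)$ is, by definition, the Zariski closure of the image of the parametrization
\[
(x_1,\dots,x_n;\al_1,\dots,\al_{n-d})\longmapsto (m_0,\dots,m_k),
\]
where we use all of $L_{d,\cX}$ over all knot sets, as in the Prony setting. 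After the linear change of coordinates $c_{i+d}=\binom{i+d}{d}m_i$, this becomes the linear section $\{c_0=\cdots=c_{d-1}=0\}$ of the space of sequences $(c_0,\dots,c_{k+d})$.

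\textbf{Step 1 (parametrization).} By Theorem~\ref{th:rational-recurrence}, $F_\nu(t)=R_\nu(t)/Q_\cX(t)$ with $\deg R_\nu\le n-d-1$. Hence $\sum_j c_j t^j = t^d F_\nu(t)$ is a rational function with denominator of degree $n$ and numerator divisible by $t^d$ of degree at most $n-1$. We will argue the converse: the space of such $R$ has dimension $n-d$. By Proposition~\ref{prop:detA} (or by independence of the products $\prod_{r\notin\{i,\dots,i+d\}}(1-x_rt)$), every such $R$ arises from some $\al$, at least for distinct nodes. So the image is exactly the set of sequences of the form $c_j=\sum_r \beta_r x_r^j$ with $c_0=\dots=c_{d-1}=0$, up to closure. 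Equivalently, it is the set of sequences satisfying a linear recurrence of order $n$ and starting with $d$ zeros.

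\textbf{Step 2 (rank).} Every such sequence makes all $(n+1)\times(n+1)$ minors vanish, since the recurrence is a kernel vector. Hence the image lies in $\mathcal{H}\cap\{c_0=\dots=c_{d-1}=0\}$, where $\mathcal{H}$ is the rank-$\le n$ Hankel variety. Conversely, a generic point of $\mathcal{H}$ (the secant variety of the rational normal curve) is a sum of $n$ powers $\sum\beta_r(1,x_r,\dots,x_r^N)$. Imposing $c_0=\dots=c_{d-1}=0$ gives $d$ linear conditions on $\beta$, leaving $n-d$ free amplitudes. This matches Step 1, so the two sets agree on a dense open subset. A dimension count gives $2n-d$ parameters, hence projective dimension $2n-d-1$ when $k\ge 2n-d-1$, as expected.

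\textbf{Step 3 (ideal), the main obstacle.} We must show that the ideal generated by the maximal minors in which the $c_0,\dots,c_{d-1}$ have been set to zero is prime. It is not enough to take a set-theoretic intersection with a linear space: primality and reducedness of this linear section need proof. We would first try the standard argument. The Hankel determinantal ring is Cohen--Macaulay of codimension $N-2n+1$, so if the section by $c_0,\dots,c_{d-1}$ drops the dimension by exactly $d$, the $c_i$ form a regular sequence and the quotient stays Cohen--Macaulay. To get reducedness and irreducibility we would check generic reducedness (Serre's $R_0$ plus $S_1$) on the dense parametrized locus of Step 2. Failing that, we would invoke the corresponding result of \cite{KSS}, where exactly this truncated Hankel variety is identified with the polytope moment variety, and reduce our statement to theirs via Steps 1--2. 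Care is needed to show that the section is not contained in the singular locus $\{\mathrm{rank}\le n-1\}$, and that no extra component supported on lower-rank sequences appears. We expect this to follow from a dimension comparison: the lower-rank stratum with $d$ leading zeros has dimension $2n-d-3<2n-d-1$.
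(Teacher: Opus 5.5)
Your route is genuinely different from the paper's. The paper gives no independent argument. It identifies the statement with the one-directional case of Theorem~3.3 of \cite{KSS} and quotes it.

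You instead start from the primality and Cohen--Macaulayness of the ideal $I$ of maximal minors of the generic $(n+1)\times(N-n+1)$ Hankel matrix, $N=k+d$ (Gruson--Peskine, Eisenbud, Eagon--Northcott). You then show that $\Lambda=\{c_0=\cdots=c_{d-1}=0\}$ cuts $\sigma_n(\nu_N)$ properly, irreducibly and generically transversally. It follows that $c_0,\dots,c_{d-1}$ is a regular sequence on $S/I$. The quotient is then Cohen--Macaulay (so $S_1$), generically reduced ($R_0$), and has one minimal prime, so it is a domain.

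This strategy is sound and more informative than the citation. It explains why the $d$ forced leading zeros do not destroy primality. Since a regular sequence of linear forms preserves degree, it also yields the dimension $2n-d-1$ and the degree $\binom{k+d-n+1}{n}=\deg\sigma_n(\nu_{k+d})$ of Corollary~\ref{cor:degree}. It does not give the Gr\"obner basis claim, for which the paper relies on \cite{KSS}. Your fallback of invoking \cite{KSS} is exactly the paper's proof.

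To make the self-contained version an actual proof, you need more than you wrote in two places.

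First, the exclusion of extra components cannot look only at $\sigma_{n-1}\cap\Lambda$. Your Step~2 (``a generic point of $\mathcal H$ is a sum of powers'') says nothing about generic points of $\mathcal H\cap\Lambda$. The section also contains rank-$n$ points that are not sums of $n$ distinct finite powers. For example, the whole osculating space $\mathrm{span}(e_{N-n+1},\dots,e_N)$ at $\nu(\infty)=(0,\dots,0,1)$ lies in $\sigma_n\cap\Lambda$. Stratify instead by the kernel form $p$. If $p$ has an $m$-fold root at infinity, then $c_0,\dots,c_{d-1}$ have rank $\min(d,n-m)$ on the $n$-dimensional solution space of the recurrence. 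Every stratum other than ``$n$ distinct finite roots'' then has dimension at most $2n-d-2$ (using $n\ge d+1$). Krull's bound says every component of $\sigma_n\cap\Lambda$ has dimension at least $2n-d-1$. Together these give both the exact drop by $d$ and irreducibility.

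Second, $R_0$ is a Terracini check. At $c=\sum\beta_r\nu(x_r)\in\Lambda$ with distinct $x_r$ and all $\beta_r\ne0$, the tangent space of the cone is spanned by the $\nu(x_r)$ and $\nu'(x_r)$. The functionals $c_0,\dots,c_{d-1}$ are already independent on it through the $d\times n$ Vandermonde matrix.

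Finally, you replace the paper's polytope-defined $\mathcal L_k(d,n)$ by the closure of the spline parametrization without comment. Equating the two requires Zariski density of polytope-realizable amplitudes. This holds for $d=2$ by Theorem~\ref{th:single-polygon-realization}, but fails literally for $d=1$, $n>2$. Also, the determinantal description needs $k\ge 2n-d$, so that the matrix has at least $n+1$ columns.
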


\section{Proof of Theorem~\ref{th:directional-hankel}}

\begin{proof}
This is the specialization to the one-dimensional moment subsequence of the Hankel determinantal theorem for moment varieties of measures on polytopes proved by Kohn, Shapiro and Sturmfels; see Theorem~3.3 of \cite{KSS}. In their notation the normalized moments satisfy
\[
c_0=\cdots=c_{d-1}=0,\qquad c_{d+i}=\binom{d+i}{d}m_i,
\]
and the moment variety is defined by the maximal minors of the corresponding $(n+1)$-row Hankel matrix. The cited theorem states more precisely that these minors generate a homogeneous prime ideal and form a reduced Groebner basis with respect to any antidiagonal term order. Applying that result with $r=k$ gives exactly the matrix displayed in Theorem~\ref{th:directional-hankel}. 
\end{proof}

\begin{corollary}\label{cor:degree}
Assume $k\ge 2n-d$. The directional moment variety $\mathcal L_k(d,n)\subset\PP^k$ is irreducible of dimension $2n-d-1$ and degree
\[
\binom{k-n+d+1}{n}.
\]
Its defining ideal has a reduced Groebner basis consisting of the maximal minors of the Hankel matrix in Theorem~\ref{th:directional-hankel}, for every antidiagonal term order.
\end{corollary}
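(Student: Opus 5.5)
The plan is to read off dimension, degree and the Gröbner statement from Theorem~\ref{th:directional-hankel}, by recognizing the Hankel matrix there as a generic Hankel matrix after a linear change of coordinates. The $(n+1)\times(k+d-n+1)$ matrix has entries $c_0,\dots,c_{k+d}$. The first $d$ of these vanish, and the rest are nonzero scalar multiples of $m_0,\dots,m_k$. So the substitution $c_{i+d}=\binom{i+d}{d}m_i$ is a diagonal rescaling of coordinates on $\PP^k$; it changes neither degree, dimension, primality, nor Gröbner bases for term orders compatible with the variable order. Thus $\mathcal L_k(d,n)$ is isomorphic to the linear section $\{c_0=\cdots=c_{d-1}=0\}$ of the classical Hankel determinantal variety $\mathcal H\subset\PP^{k+d}$. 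Here $\mathcal H$ is cut out by the maximal minors of the $(n+1)$-row Hankel matrix in $c_0,\dots,c_{k+d}$. The hypothesis $k\ge 2n-d$ guarantees that there are at least $n+1$ columns, so maximal minors are $(n+1)\times(n+1)$.

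Next I would recall the standard facts about $\mathcal H$ (Gruson--Peskine, Conca, Eisenbud). Its ideal is prime and Cohen--Macaulay, and the maximal minors form a Gröbner basis for antidiagonal orders. It has the expected codimension $(k+d-n+1)-(n+1)+1=k+d-2n+1$ of the Eagon--Northcott complex. Hence $\dim\mathcal H=(k+d)-(k+d-2n+1)=2n-1$, and its degree is $\binom{k+d-n+1}{n}$. This is the Eagon--Northcott degree $\binom{q}{p-1}$ for a $p\times q$ matrix of linear forms of expected codimension, here with $p=n+1$ and $q=k+d-n+1$.

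The main step, and the one I expect to be the obstacle, is to show that cutting by the $d$ hyperplanes $c_0=\cdots=c_{d-1}=0$ is a proper intersection that preserves degree. I would argue via the Gröbner basis, because Theorem~\ref{th:directional-hankel} (through \cite{KSS}) already asserts that the specialized minors generate the prime ideal $L_k(d,n)$ and form a reduced Gröbner basis for antidiagonal orders. The initial ideal of the generic Hankel minors is generated by antidiagonal products. Ordering so that $c_0,\dots,c_{d-1}$ are smallest, I would check that these variables form a regular sequence modulo the initial ideal. This gives that the specialization has initial ideal equal to the image, so the Hilbert polynomial of the section equals that of $\mathcal H$ cut by $d$ general hyperplanes. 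It follows that the dimension drops by exactly $d$, to $2n-1-d$, and the degree stays $\binom{k-n+d+1}{n}$.

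As a consistency check, I would compare with the parametrization in Theorem~\ref{th:generic-reconstruction}. There the map from $(x_1,\dots,x_n;\alpha_1,\dots,\alpha_{n-d})$ is generically injective with $2n-d$ affine parameters. Since the map is homogeneous in the amplitudes, the image is a projective variety of dimension $2n-d-1$, which matches the computation above. Irreducibility follows from primality in Theorem~\ref{th:directional-hankel}, or equally from the image of an irreducible parameter space.
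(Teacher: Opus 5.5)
Your proof is correct, but it takes a different route from the paper. The paper proves the corollary in one line by quoting the dimension, degree and Gr\"obner assertions of Theorem~3.3 in \cite{KSS}. You instead realize $\mathcal L_k(d,n)$ as the coordinate section $\{c_0=\cdots=c_{d-1}=0\}$ of the generic Hankel variety $\mathcal H\subset\PP^{k+d}$, the $n$-th secant variety of the rational normal curve of degree $k+d$. You then carry its Eagon--Northcott dimension and degree across by a Gr\"obner degeneration.

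The step you flagged as the obstacle is in fact immediate. For a maximal minor with columns $j_0<\cdots<j_n$, the antidiagonal entries are $c_{i+j_{n-i}}$ with $n\le i+j_{n-i}\le k+d-n$. So the antidiagonal monomials are exactly the degree-$(n+1)$ monomials in $c_n,\dots,c_{k+d-n}$, and the initial ideal is $(c_n,\dots,c_{k+d-n})^{n+1}$; its Hilbert series is the Eagon--Northcott one. Since $d\le n-1$, the variables $c_0,\dots,c_{d-1}$ do not occur in this ideal. They therefore form a regular sequence on $S/\mathrm{in}(I)$, hence on $S/I$, and $\mathrm{in}(I+(c_0,\dots,c_{d-1}))=\mathrm{in}(I)+(c_0,\dots,c_{d-1})$. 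With a diagonal order this check would fail, because products like $c_0c_2\cdots c_{2n}$ involve $c_0$; the antidiagonal choice is what makes it work.

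Your route explains the degree formula as a secant-variety degree. It also actually proves the Gr\"obner statement for the specialized minors, so only primality has to be imported from \cite{KSS}. The paper's citation is shorter but entirely black-box.

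One caveat applies to both versions: what one obtains is a Gr\"obner basis, not a reduced one in the strict sense. For $d=0$, $n=1$, $k=4$, the tail $c_1c_3$ of $c_1c_3-c_2^2$ is the antidiagonal leading term of $c_0c_4-c_1c_3$.
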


\begin{proof}
The dimension, degree and Groebner-basis statement are the remaining assertions of Theorem~3.3 in \cite{KSS}, after substituting the present notation $k$ for the moment order.
\end{proof}

\section{Single-polytope realization in low dimension}\label{sec:single-polytope}

We now return to the geometric realization problem.  The univariate Prony system naturally sees the positive spline cone generated by consecutive simplex splines.  A single convex polytope imposes additional convexity constraints on the slice-volume function.  In dimension two these constraints give a complete answer.

Let $d=2$ and let
\[
\cX=\{x_1<x_2<\cdots <x_n\}.
\]
A measure in $L_{2,\cX}$ has a continuous piecewise-linear density $\rho$ supported on $[x_1,x_n]$ and affine on every interval $[x_i,x_{i+1}]$.  Write
\[
 y_i:=\rho(x_i),\qquad i=1,\dots,n.
\]
For a polygonal slice density arising from a compact convex polygon whose vertical projection has endpoints $x_1,x_n$, one necessarily has $y_1=y_n=0$.

\begin{theorem}[Single-polygon realization]\label{th:single-polygon-realization}
A continuous piecewise-linear density $\rho$ with knots contained in $\cX$ is the pushforward of Lebesgue measure on a compact convex polygon in $\RR^2$ under the projection $(x,y)\mapsto x$ if and only if
\begin{equation}\label{eq:polygon-concavity-conditions}
 y_1=y_n=0,\qquad y_i\ge 0\quad (2\le i\le n-1),
\end{equation}
and
\begin{equation}\label{eq:slope-monotonicity}
\frac{y_{i+1}-y_i}{x_{i+1}-x_i}
\le
\frac{y_i-y_{i-1}}{x_i-x_{i-1}},
\qquad i=2,\dots,n-1.
\end{equation}
Equivalently, $\rho$ is a nonnegative concave piecewise-linear function on $[x_1,x_n]$, extended by zero outside this interval.
\end{theorem}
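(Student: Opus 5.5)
The proof splits into the two implications. Throughout, ``pushforward'' means the slice-length function. For a compact convex polygon $P$ and $x\in\RR$, set $\rho(x)=\operatorname{length}\{y:(x,y)\in P\}$.

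\emph{Necessity.} Let $P$ be a convex polygon whose vertices project into $\cX$ and whose projection is $[x_1,x_n]$. Write $P=\{(x,y): x_1\le x\le x_n,\ g(x)\le y\le f(x)\}$, where $f$ is the upper boundary and $g$ the lower boundary. By convexity of $P$, $f$ is concave and $g$ is convex on $[x_1,x_n]$. Both are piecewise linear, with breakpoints only at $x$-coordinates of vertices, hence inside $\cX$. So $\rho=f-g$ is concave, piecewise linear with knots in $\cX$, and nonnegative. It is continuous on the closed interval because a polygon's boundary functions are continuous there. At the endpoints the slice of $P$ over $x_1$ (resp.\ $x_n$) is either a vertex or a vertical edge. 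The convention $y_1=y_n=0$ requires $\rho(x_1)=\rho(x_n)=0$, i.e.\ no vertical edges. This holds automatically if $\rho$ is supposed to be a continuous density on $\RR$, as in $L_{2,\cX}$: a vertical edge would create a jump of $\rho$ at the endpoint. I will state this explicitly, since a density in $L_{2,\cX}$ is continuous on $\RR$. Concavity of a piecewise-linear function with knots in $\cX$ is equivalent to the slopes on consecutive intervals being nonincreasing, which is exactly \eqref{eq:slope-monotonicity}. This also proves the stated equivalence of the two formulations.

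\emph{Sufficiency.} Given nonnegative numbers $y_i$ with $y_1=y_n=0$ satisfying \eqref{eq:slope-monotonicity}, let $\rho$ be the piecewise-linear interpolant, which is concave. Take $P=\{(x,y): x_1\le x\le x_n,\ 0\le y\le \rho(x)\}$, the hypograph of $\rho$ cut by the $x$-axis. This is the intersection of the half-planes $y\ge 0$, $x_1\le x\le x_n$, and $y\le$ (each affine piece of $\rho$), because a concave piecewise-linear function is the minimum of its affine pieces. Hence $P$ is a compact convex polygon. Its vertical slice over $x$ has length exactly $\rho(x)$, so $\pi_*\mu_P=\rho\,dx$. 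Its vertices are among the points $(x_i,0)$ and $(x_i,y_i)$, so they project into $\cX$.

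\emph{Degenerate case.} If $\rho\equiv 0$, $P$ is a segment of area zero; this either gives the zero measure or is excluded. More generally, $P$ is two-dimensional whenever some $y_i>0$. I do not see any step as a real obstacle. The only point needing care is this bookkeeping: vertical edges at the endpoints, possible collinear knots (where some $x_i$ is not actually a vertex, which is allowed since knots are only required to be \emph{contained in} $\cX$), and the degenerate zero case.
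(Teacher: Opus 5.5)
Your proof is correct and follows essentially the paper's route. For necessity, you show the slice-length function is concave; your decomposition $\rho=f-g$ with $f$ concave and $g$ convex is exactly the one-dimensional Brunn--Minkowski fact the paper invokes. For sufficiency, you build an explicit polygon from $\rho$: you take $0\le y\le\rho(x)$ written as a finite intersection of half-planes, while the paper takes the symmetric region $-\rho/2\le y\le\rho/2$ and checks convexity directly. Two points you make explicit are only implicit in the paper: that continuity of $\rho$ on $\RR$ is what rules out vertical edges over $x_1,x_n$ and forces $y_1=y_n=0$, and that the projection interval of the polygon is assumed to be $[x_1,x_n]$.
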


\begin{proof}
Let $P\subset\RR^2$ be a compact convex polygon and let
\[
\rho(x)=\operatorname{length}\{y:(x,y)\in P\}
\]
be its vertical slice length.  By the one-dimensional Brunn--Minkowski inequality applied to the vertical fibres, $\rho$ is concave on the projection interval of $P$.  Since $P$ is a polygon, $\rho$ is piecewise linear, and it vanishes at the two endpoints of the projection interval.  Thus the values $y_i=\rho(x_i)$ satisfy \eqref{eq:polygon-concavity-conditions} and the monotonicity of slopes \eqref{eq:slope-monotonicity}.

Conversely, suppose that $\rho$ is nonnegative, concave and piecewise linear.  Define
\[
P_\rho:=\{(x,y)\in\RR^2: x\in[x_1,x_n],\ -\rho(x)/2\le y\le \rho(x)/2\}.
\]
This set is convex: if $(x,y),(x',y')\in P_\rho$ and $0\le\lambda\le1$, then
\[
|\lambda y+(1-\lambda)y'|
\le \lambda\rho(x)/2+(1-\lambda)\rho(x')/2
\le \rho(\lambda x+(1-\lambda)x')/2
\]
by concavity of $\rho$.  Since $\rho$ is piecewise linear, $P_\rho$ is a compact convex polygon.  Its vertical slice at $x$ has length exactly $\rho(x)$, so its pushforward density is $\rho$.
\end{proof}

\begin{corollary}[The literal cone for polygons]\label{cor:literal-cone-polygons}
For $d=2$ the literal single-polytope cone inside $L_{2,\cX}$ is the polyhedral cone of nonnegative concave piecewise-linear densities.  In the nodal-value coordinates $(y_1,\dots,y_n)$ it is cut out by the linear equalities and inequalities \eqref{eq:polygon-concavity-conditions}--\eqref{eq:slope-monotonicity}.
\end{corollary}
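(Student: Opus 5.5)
The corollary is essentially a repackaging of Theorem~\ref{th:single-polygon-realization}, so I would deduce it from that theorem in two short steps.

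First, I need to make sure the literal single-polytope cone is well defined inside $L_{2,\cX}$. I would define it as the set of densities in $L_{2,\cX}$ that are pushforwards of Lebesgue measure on some compact convex polygon, with knots contained in $\cX$. On $L_{2,\cX}$ I would use the nodal-value coordinates $(y_1,\dots,y_n)$, where $y_i=\rho(x_i)$. This is legitimate because an element of $L_{2,\cX}$ has a continuous density, affine on each $[x_i,x_{i+1}]$ and vanishing outside $[x_1,x_n]$, so it is uniquely determined by its nodal values. The map $\rho\mapsto(y_1,\dots,y_n)$ is linear and injective. Its image is the subspace $\{y_1=y_n=0\}$, which has dimension $n-2$; this matches $\dim L_{2,\cX}=n-d=n-2$ from Proposition~\ref{prop:span}. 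Since $(y_1,\dots,y_n)$ thus linearly parametrizes $L_{2,\cX}$, linear conditions in these coordinates are linear conditions on the space.

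Second, I would apply Theorem~\ref{th:single-polygon-realization} directly. It says that $\rho$ is a polygon pushforward exactly when \eqref{eq:polygon-concavity-conditions} and \eqref{eq:slope-monotonicity} hold. In the $y$-coordinates these are linear equalities together with the homogeneous linear inequalities $y_i\ge 0$ and the slope inequalities. After clearing the positive denominators $x_{i+1}-x_i$ and $x_i-x_{i-1}$, each slope inequality is a homogeneous linear inequality in $(y_{i-1},y_i,y_{i+1})$. A finite system of homogeneous linear inequalities and equalities cuts out a polyhedral cone, and the theorem identifies this set with the nonnegative concave piecewise-linear densities.

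The only subtle point I expect is the degenerate boundary. The zero density satisfies all the conditions but is not the pushforward of any genuine polygon. The cleanest resolution is to admit degenerate convex polygons, including the empty set, as the cone's apex, so that the set is a closed cone. Alternatively one can note that the converse construction $P_\rho$ in the theorem already yields degenerate polygons (segments) for $\rho\equiv 0$. One might also worry that a polygon could have vertices projecting outside $\cX$ and still produce knots in $\cX$. This causes no problem, because the theorem's statement only concerns the density, not where the vertices lie.
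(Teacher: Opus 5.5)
Your route is the paper's own: its proof is the one-line remark that Theorem~\ref{th:single-polygon-realization} is being rewritten in nodal coordinates. Your check that $\rho\mapsto(y_1,\dots,y_n)$ identifies $L_{2,\cX}$ linearly with the $(n-2)$-dimensional subspace $\{y_1=y_n=0\}$ is correct, and it makes explicit something the paper leaves implicit.

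There is, however, a genuine problem with your definition of the literal cone as all elements of $L_{2,\cX}$ that are pushforwards of \emph{some} compact convex polygon: with that definition the statement is false.
\begin{itemize}
\item For $\cX=\{0,1,2,3,4\}$, the triangle with vertices $(1,0),(2,1),(3,0)$ has slice density $\mu_2$. This is the hat with knots $1,2,3$ and nodal values $(0,0,1,0,0)$. It lies in $L_{2,\cX}$, but \eqref{eq:slope-monotonicity} fails at $i=2$ (left slope $0$, right slope $1$).
\item More generally, for $n\ge4$ every basis spline $\mu_i$ of Proposition~\ref{prop:span} is a triangle pushforward that violates \eqref{eq:slope-monotonicity} at the node $x_i$ or at $x_{i+2}$.
\item The sum $\mu_1+\mu_3$ (nodal values $(0,1,0,1,0)$) is not concave on its support. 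By Brunn--Minkowski it is therefore not a polygon pushforward, so your set is not even convex.
\end{itemize}
The source of the error is that the only-if half of Theorem~\ref{th:single-polygon-realization} gives concavity only on the polygon's own projection interval. It yields concavity on all of $[x_1,x_n]$, which is what \eqref{eq:slope-monotonicity} encodes, only when that interval is exactly $[x_1,x_n]$. This is the standing assumption in the sentence just before the theorem, and you cannot drop it when you quote the theorem.

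The fix is to define the literal cone as the set of pushforwards of compact convex polygons whose projection to the $x$-axis is exactly $[x_1,x_n]$ and whose slice density has its breakpoints in $\cX$. Add $0$ as the apex, realized as you say by the degenerate segment $P_\rho$ with $\rho\equiv0$. With this definition both directions of the theorem apply verbatim, and the rest of your argument (homogeneous linear conditions, hence a polyhedral cone) goes through.

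Two side remarks.
\begin{itemize}
\item Your worry about vertices projecting off $\cX$ is vacuous. A vertex over an interior point $c$ of the projection interval is a kink of the upper (concave) or lower (convex) boundary chain. Such kinks contribute with the same sign to $\rho''$, so $c$ is a breakpoint of $\rho$.
\item For the same reason, you should not instead require the projected vertex set to be \emph{exactly} $\cX$, as the paper's earlier discussion suggests. That would force strict inequality in \eqref{eq:slope-monotonicity} at every interior node and give only the interior of the cone, not the closed polyhedral cone.
\end{itemize}
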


\begin{proof}
This is just Theorem~\ref{th:single-polygon-realization} written in the coordinates given by the values of the density at the knots.
\end{proof}

For higher-dimensional polytopes the same argument gives a useful necessary condition, and also a large explicit sufficient family.

\begin{proposition}[Brunn--Minkowski obstruction and a realizable subfamily]\label{prop:BM-obstruction}
Let $d\ge2$ and let $P\subset\RR^d$ be a compact convex $d$-polytope.  If
\[
\rho(x)=\operatorname{vol}_{d-1}\{z\in P:z_1=x\}
\]
is its slice-volume density, then $\rho^{1/(d-1)}$ is concave on the projection interval of $P$.
Conversely, if $g$ is a nonnegative concave piecewise-linear function on an interval and $K\subset\RR^{d-1}$ is a compact convex $(d-1)$-polytope containing the origin, then
\[
P_{g,K}:=\{(x,u): x\in I,\ u\in g(x)K\}
\]
is a compact convex $d$-polytope and its projection density is
\[
\rho(x)=\operatorname{vol}_{d-1}(K)g(x)^{d-1}.
\]
\end{proposition}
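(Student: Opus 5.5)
The plan has two independent parts, mirroring the proof of Theorem~\ref{th:single-polygon-realization}: the necessary condition comes from Brunn--Minkowski, and the sufficient family from an explicit convexity computation.

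\emph{Necessity.} Let $I=\pi(P)=[a,b]$ and, for $x\in I$, set $P_x:=\{u\in\RR^{d-1}:(x,u)\in P\}$, so that $\rho(x)=\operatorname{vol}_{d-1}(P_x)$. For $x,x'\in I$ and $\lambda\in[0,1]$, convexity of $P$ gives the inclusion
\[
\lambda P_x+(1-\lambda)P_{x'}\subseteq P_{\lambda x+(1-\lambda)x'} .
\]
Each slice is a nonempty compact convex set. The Brunn--Minkowski inequality in $\RR^{d-1}$, in its form for nonempty compact sets (degenerate slices of volume zero are allowed), then yields
\[
\rho(\lambda x+(1-\lambda)x')^{1/(d-1)}\ge\lambda\rho(x)^{1/(d-1)}+(1-\lambda)\rho(x')^{1/(d-1)} .
\]
This is the asserted concavity. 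For $d=2$ it reduces to the argument already used in Theorem~\ref{th:single-polygon-realization}.

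\emph{Sufficiency.} Let $I=[a,b]$ be the interval on which $g$ is defined. I will verify three things in order.

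First, $P_{g,K}$ is convex. Take $(x,u)$ and $(x',u')$ in $P_{g,K}$ and write $u=g(x)k$, $u'=g(x')k'$ with $k,k'\in K$. If $g(x)$ and $g(x')$ are both zero there is nothing to prove, since $0\in K$. Otherwise put $s=\lambda g(x)+(1-\lambda)g(x')>0$. Then
\[
\lambda u+(1-\lambda)u'=s\,\tilde k,\qquad \tilde k=\frac{\lambda g(x)k+(1-\lambda)g(x')k'}{s}\in K,
\]
because $\tilde k$ is a convex combination of points of $K$. Concavity of $g$ gives $g(\lambda x+(1-\lambda)x')\ge s$. Since $0\in K$ and $K$ is convex, $sK\subseteq tK$ whenever $0\le s\le t$. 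Hence the combined point lies in $g(\lambda x+(1-\lambda)x')K$.

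Second, $P_{g,K}$ is a polytope. Break $I$ into the finitely many pieces on which $g$ is affine. On each piece $[x_i,x_{i+1}]$ the set $\{(x,u):u\in g(x)K\}$ equals the convex hull of $\{x_i\}\times g(x_i)K$ and $\{x_{i+1}\}\times g(x_{i+1})K$. This holds because, when $g$ is affine, the slice at $\lambda x_i+(1-\lambda)x_{i+1}$ is exactly $\lambda g(x_i)K+(1-\lambda)g(x_{i+1})K$. So each piece is a polytope, and the whole set is the convex hull of finitely many points, namely the scaled vertices of $K$ over each breakpoint, which makes it a compact convex polytope.

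Third, the density formula. The slice over $x$ is $g(x)K$, and homogeneity of Lebesgue measure gives
\[
\operatorname{vol}_{d-1}(g(x)K)=g(x)^{d-1}\operatorname{vol}_{d-1}(K).
\]
One should also note that $P_{g,K}$ is full-dimensional. This needs $K$ to be a $(d-1)$-polytope with nonzero volume and $g$ not identically zero; in the degenerate case the density vanishes and the claim is trivial.

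The only real subtlety is the convex-hull identification in the polytope step, specifically the equality of the scaled sets. The inclusion $\lambda sK+(1-\lambda)tK\subseteq(\lambda s+(1-\lambda)t)K$ holds by convexity. The reverse inclusion is immediate by writing $(\lambda s+(1-\lambda)t)k=\lambda(sk)+(1-\lambda)(tk)$. So I expect no serious obstacle; the main care points are zero values of $g$ and the nonempty-compact form of Brunn--Minkowski.
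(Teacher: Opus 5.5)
Your proposal is correct and follows the paper's proof essentially step for step. For necessity you apply Brunn--Minkowski to the fibres via $\lambda P_x+(1-\lambda)P_{x'}\subseteq P_{\lambda x+(1-\lambda)x'}$, and for the converse you use the same rescaled convex-combination argument with concavity of $g$ and $sK\subseteq tK$. Your convex-hull description over the affine pieces of $g$ supplies the polytope step that the paper only asserts, and your caveat that $g\not\equiv 0$ is needed for $P_{g,K}$ to be $d$-dimensional is one the paper omits.
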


\begin{proof}
The concavity of $\rho^{1/(d-1)}$ is the Brunn--Minkowski theorem applied to the fibres of $P$ over two points of the projection interval. For the converse, let $u\in g(x)K$ and $u'\in g(x')K$. Write $u=g(x)a$ and $u'=g(x')a'$ with $a,a'\in K$, with the evident interpretation when one of the values of $g$ is zero. Since $0\in K$ and $K$ is convex, the vector
\[
\frac{\lambda g(x)}{\lambda g(x)+(1-\lambda)g(x')}a+
\frac{(1-\lambda)g(x')}{\lambda g(x)+(1-\lambda)g(x')}a'
\]
belongs to $K$ whenever the denominator is nonzero. Concavity gives
\[
\lambda g(x)+(1-\lambda)g(x')\leq g(\lambda x+(1-\lambda)x'),
\]
and the inclusion $0\in K$ implies $sK\subset tK$ for $0\le s\le t$. Hence the convex combination of $(x,u)$ and $(x',u')$ belongs to $P_{g,K}$. Since $g$ is piecewise linear and $K$ is a polytope, $P_{g,K}$ is a polytope. The fibre over $x$ is the homothetic copy $g(x)K$, whose volume is $\operatorname{vol}_{d-1}(K)g(x)^{d-1}$.
\end{proof}

\section{Compatibility of different projections}\label{sec:projection-compatibility}

The second issue mentioned in the concluding remarks is the interaction among one-dimensional projections in different directions.  The following elementary observations separate what is already forced by the one-dimensional data from the genuinely multidirectional part of the inverse problem.

\begin{proposition}[Basic compatibility conditions]\label{prop:projection-compatibility}
Let $\mu$ be a compactly supported finite positive measure on $\RR^d$, and for every unit vector $u$ let $\mu_u$ be the pushforward of $\mu$ under $z\mapsto u\cdot z$.  Then the following hold.
\begin{enumerate}
\item The zeroth moment of $\mu_u$ is independent of $u$ and equals the total mass of $\mu$.
\item The first directional moment is linear in $u$:
\[
\int t\,d\mu_u(t)=u\cdot b,
\qquad b:=\int z\,d\mu(z).
\]
\item The second directional moment is a quadratic form in $u$:
\[
\int t^2\,d\mu_u(t)=u^T S u,
\qquad S:=\int zz^T\,d\mu(z),
\]
where $S$ is positive semidefinite.
\item More generally, the $r$-th directional moment is the evaluation on the diagonal $u=\cdots=u$ of the symmetric moment tensor
\[
T_r:=\int z^{\otimes r}\,d\mu(z).
\]
\end{enumerate}
Consequently, arbitrary independently chosen univariate higher Prony data in several directions need not be compatible with a common measure, and hence need not come from a common polytope.
\end{proposition}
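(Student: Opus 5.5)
The whole proposition rests on the change-of-variables formula for pushforwards. For every bounded Borel function $f$ on $\RR$ (or every continuous $f$, since everything is compactly supported),
\[
\int_{\RR} f(t)\,d\mu_u(t)=\int_{\RR^d} f(u\cdot z)\,d\mu(z).
\]
Applying this with $f(t)=t^r$ is legitimate because $\mu$ has compact support, so all these integrals are finite. This reduces every directional moment to a moment of $\mu$ itself, and the four items then follow by expanding the integrand.

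\emph{Items (1) and (2).} Taking $f\equiv 1$ gives $\mu_u(\RR)=\mu(\RR^d)$, which does not depend on $u$. Taking $f(t)=t$ gives $\int (u\cdot z)\,d\mu(z)$. Linearity of the integral lets us pull $u$ out, producing $u\cdot b$ with $b=\int z\,d\mu(z)$.

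\emph{Item (3).} Write $(u\cdot z)^2=u^T(zz^T)u$ and integrate entrywise to obtain $u^TSu$. Positive semidefiniteness of $S$ comes from the same identity: for any vector $v$,
\[
v^TSv=\int (v\cdot z)^2\,d\mu(z)\ge 0,
\]
since $\mu$ is positive.

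\emph{Item (4).} Use $(u\cdot z)^r=\langle z^{\otimes r},u^{\otimes r}\rangle$ and integrate, which gives $\langle T_r,u^{\otimes r}\rangle$. Equivalently, expand by the multinomial theorem: $\sum_{|I|=r}\binom{r}{I}u^I m_I(\mu)$. This shows the $r$-th directional moment is a homogeneous polynomial of degree $r$ in $u$ whose coefficients are the moments $m_I(\mu)$, i.e. the evaluation of the symmetric tensor $T_r$ on the diagonal.

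\emph{The consequence.} Univariate data chosen independently in several directions can violate these constraints. I would exhibit this concretely with $d=2$ and three directions $e_1,e_2,(e_1+e_2)/\sqrt2$. Prescribe, for instance, intervals (the $d=0$ or $d=1$ Prony data) of different lengths in these directions, or simply different total masses $m_0$. Item (1) then forbids a common $\mu$. Alternatively, prescribe first moments violating linearity: $\langle b,(e_1+e_2)/\sqrt2\rangle$ must equal $(b_1+b_2)/\sqrt2$. Since every polytope measure $\mu_{\cP}$ is such a $\mu$, the data cannot come from a common polytope either.

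There is no real obstacle here. The only points needing care are justifying integrability via compact support and making the counterexample explicit rather than leaving it as a remark.
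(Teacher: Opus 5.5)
Your proof is correct and follows the same route as the paper: both reduce each item to the pushforward identity $\int t^r\,d\mu_u(t)=\int (u\cdot z)^r\,d\mu(z)$ and then expand, with your positive-semidefiniteness check $v^TSv=\int (v\cdot z)^2\,d\mu\ge 0$ and your explicit mismatched-$m_0$ and non-linear-first-moment counterexamples simply filling in details the paper leaves implicit. One small slip: for polygons the directional data are the $d=2$ higher Prony data, not the $d=0$ or $d=1$ data, but the mismatched-mass counterexample works for every $d$, so nothing changes.
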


\begin{proof}
Each assertion follows by expanding
\[
\int t^r\,d\mu_u(t)=\int (u\cdot z)^r\,d\mu(z).
\]
The cases $r=0,1,2$ give the displayed mass, barycentre and second-moment matrix conditions.  The general case is the corresponding symmetric tensor identity.
\end{proof}

\begin{proposition}[All directional projections determine the measure]\label{prop:cw}
The family of all one-dimensional pushforwards $\{\mu_u:u\in S^{d-1}\}$ determines the compactly supported measure $\mu$ uniquely.  In particular, it determines a convex polytope $P$ uniquely up to sets of Lebesgue measure zero when $\mu$ is Lebesgue measure restricted to $P$.
\end{proposition}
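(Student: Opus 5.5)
The plan is to use the Fourier transform (Cramér--Wold). Because $\mu$ is compactly supported and finite, its Fourier transform
\[
\widehat\mu(\xi)=\int_{\RR^d} e^{-i\,\xi\cdot z}\,d\mu(z)
\]
is a well-defined continuous function on $\RR^d$. Moreover, a finite measure is determined by its Fourier transform.

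First I will express $\widehat\mu$ through the pushforwards. Write $\xi=s u$ with $s\in\RR$ and $u\in S^{d-1}$. By the definition of $\mu_u$ as the pushforward under $z\mapsto u\cdot z$,
\[
\widehat\mu(su)=\int e^{-is\,u\cdot z}\,d\mu(z)=\int_\RR e^{-ist}\,d\mu_u(t)=\widehat{\mu_u}(s).
\]
So the one-dimensional transforms $\widehat{\mu_u}$, taken over all $u$, recover $\widehat\mu$ on all of $\RR^d$. The origin is covered by any $u$, since there the value is the total mass, consistent with Proposition~\ref{prop:projection-compatibility}(1).

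Second, I will conclude uniqueness. If two compactly supported finite measures $\mu,\mu'$ have $\mu_u=\mu'_u$ for every $u$, then $\widehat\mu=\widehat{\mu'}$. By Fourier uniqueness for finite Borel measures, $\mu=\mu'$. I would take this as standard; one route is density of trigonometric polynomials in $C_c$ on a large cube containing both supports.

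Alternatively, there is a moment-based route that fits the paper's theme. By Proposition~\ref{prop:projection-compatibility}(4), the directional moments $u\mapsto\int (u\cdot z)^r\,d\mu$ are known for all $u$. A homogeneous polynomial of degree $r$ in $u$ determines its coefficients, and hence determines all moments $m_I(\mu)$. By Stone--Weierstrass, polynomials are dense in $C(K)$ for a compact $K$ containing both supports, so the moments determine $\mu$. This route avoids Fourier analysis and uses compact support essentially.

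Finally, for the polytope statement, take $\mu=\mu_P$ and $\mu'=\mu_{P'}$ with equal projections. Then $\mathbf 1_P=\mathbf 1_{P'}$ almost everywhere, so $P$ and $P'$ agree up to a null set. Since both are closed convex bodies, each is the closure of its interior. It follows that $P=P'$ exactly, not just up to a null set.

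The only real obstacle is quoting a correct uniqueness theorem, either Fourier or moment determinacy. With compact support this is standard, and I expect no genuine difficulty.
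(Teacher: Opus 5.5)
Your proposal is correct and is essentially the paper's own argument: the identity $\widehat{\mu_u}(s)=\widehat\mu(su)$ recovers $\widehat\mu$ on all of $\RR^d$, and Fourier uniqueness for finite measures concludes. Your two additions go beyond what the paper states and are both valid: the moment-based route (recovering every $m_I(\mu)$ from the homogeneous polynomials $u\mapsto\int(u\cdot z)^r\,d\mu$, then applying Stone--Weierstrass) and the sharpening from equality up to a null set to exact equality $P=P'$, which uses that a full-dimensional compact convex set is the closure of its interior.
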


\begin{proof}
This is the standard Cramer--Wold/Fourier argument; see also the tomography viewpoint in \cite{Gardner}. The Fourier transform of $\mu_u$ is
\[
\widehat{\mu_u}(s)=\int e^{ist}\,d\mu_u(t)=\int e^{is u\cdot z}\,d\mu(z)=\widehat\mu(su).
\]
As $s\in\RR$ and $u\in S^{d-1}$ vary, the vectors $su$ fill all of $\RR^d$.  Hence the family of one-dimensional pushforwards determines the full Fourier transform $\widehat\mu$, and therefore determines $\mu$.  If $\mu$ is the restriction of Lebesgue measure to a convex body, then equality of measures implies equality of the bodies up to a null set; for compact convex polytopes this is the natural uniqueness statement.
\end{proof}

\subsection{Finite ambiguity from two generic projections}

We now formulate a simple rigidity statement illustrating the additional
constraints imposed by several projection directions.

Assume that $\mu$ is an atomic probability measure in $\RR^2$ supported at
$n$ distinct points
\[
p_i=(x_i,y_i),
\qquad i=1,\dots,n,
\]
with nonzero weights $w_i$.

For a direction $u=(u_1,u_2)$, the projected atoms are located at
\[
\lambda_i(u)=u_1x_i+u_2y_i.
\]
For generic $u$, the projected values are pairwise distinct, and the
classical Prony reconstruction recovers the unordered set
\[
\{\lambda_1(u),\dots,\lambda_n(u)\}
\]
together with the corresponding weights.

The main difficulty is therefore not reconstruction in a single direction
(which is governed by the KSS-Hankel conditions), but rather the matching
problem between several directions.

\begin{proposition}\label{pr:finiteambiguity}
Let $u,v\in\RR^2$ be linearly independent generic directions.  Then the
pair of projected datasets
\[
\{\lambda_i(u)\}_{i=1}^n,
\qquad
\{\lambda_i(v)\}_{i=1}^n
\]
determines the support configuration
\[
\{(x_i,y_i)\}_{i=1}^n
\]
up to at most finitely many possibilities.
\end{proposition}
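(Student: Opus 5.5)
The plan is to reduce the statement to a finite matching problem. Since $u$ and $v$ are linearly independent, the linear map
\[
\Lambda:\RR^2\to\RR^2,\qquad p\longmapsto (u\cdot p,\ v\cdot p)
\]
is invertible. Hence each support point $p_i=(x_i,y_i)$ is uniquely determined by the pair $(\lambda_i(u),\lambda_i(v))$, via $p_i=\Lambda^{-1}(\lambda_i(u),\lambda_i(v))$. The data, however, are only the two \emph{unordered} sets
\[
A=\{a_1,\dots,a_n\}:=\{\lambda_i(u)\}_{i=1}^n,\qquad B=\{b_1,\dots,b_n\}:=\{\lambda_i(v)\}_{i=1}^n .
\]
By genericity of $u$ and $v$, the elements of each set are pairwise distinct, so each set has exactly $n$ elements. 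What is missing is the bijection pairing each $a_k$ with the $b_l$ coming from the same support point.

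First, I would fix arbitrary labelings of $A$ and $B$ by $\{1,\dots,n\}$. Any support configuration compatible with the data then has the form
\[
\{\Lambda^{-1}(a_k,b_{\sigma(k)}) : k=1,\dots,n\}
\]
for some permutation $\sigma\in S_n$. The reason is that each true point $p_i$ projects to some $a_k\in A$ and some $b_l\in B$. The assignment $k\mapsto l$ is a bijection because the points are distinct and the values within each set are distinct. Second, the resulting unordered configuration depends only on $\sigma$. This gives at most $n!$ candidate configurations, so there are finitely many possibilities, and the true configuration is among them (it corresponds to the true matching). Each candidate consists of $n$ distinct points, since distinct $k$ give distinct $a_k$.

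The only step needing care is justifying that the genericity hypothesis really yields distinct projections. I would make this explicit: for $i\ne j$, the condition $u\cdot(p_i-p_j)=0$ defines a line in direction space, since $p_i\ne p_j$. So outside finitely many lines, both $u$ and $v$ separate the points.

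Optionally, I would remark that the weights can refine but not remove the ambiguity. The Prony reconstruction attaches to each $a_k$ and each $b_l$ a weight, and a candidate $\sigma$ is admissible only if $w(a_k)=w(b_{\sigma(k)})$ for all $k$. This cuts down the $n!$ bound, down to a single candidate when the weights are distinct, but it is not needed for the finiteness claim. I expect no genuine obstacle: the statement is essentially combinatorial once invertibility of $\Lambda$ is noted.
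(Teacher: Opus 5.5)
Your proposal is correct and follows essentially the same route as the paper: invertibility of $p\mapsto(u\cdot p,\,v\cdot p)$ reduces the ambiguity to the choice of a matching permutation $\sigma\in S_n$, which gives at most $n!$ candidate configurations. Your explicit check that generic directions separate the points, and your remark that matching weights cuts down the candidates, are sound additions that the paper's shorter proof does not spell out.
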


\begin{proof}
Since $u$ and $v$ are linearly independent, the map
\[
T:\RR^2\to\RR^2,
\qquad
p\mapsto (\langle p,u\rangle,\langle p,v\rangle)
\]
is invertible.

If one knew the correct matching between the projected coordinates in the
two directions, then each point $p_i$ would be uniquely reconstructed by
solving a nondegenerate $2\times2$ linear system.

Thus all ambiguities arise solely from permutations matching the
$u$-projections with the $v$-projections.  Since there are only finitely
many such permutations, one obtains only finitely many candidate
configurations.
\end{proof}

\begin{remark}
Although elementary, the proposition clarifies the geometric nature of the multidirectional inverse problem.  It shows that
the multidirectional inverse problem has a fundamentally different nature
from the one-directional KSS theory: the essential issue becomes the
compatibility of matchings between several projected Prony systems.
\end{remark}

\subsection{A codimension estimate for compatible directional data}

We next give a quantitative form of the compatibility phenomenon.

Fix directions
\[
u_1,\dots,u_N\in\RR^d
\]
and consider directional moments up to order $R$:
\[
M_r(u_j),
\qquad
0\le r\le R,\quad 1\le j\le N.
\]

For fixed $r$, Proposition~\ref{pr:rank} shows that the possible vectors
\[
(M_r(u_1),\dots,M_r(u_N))
\]
belong to a vector space of dimension at most
\[
\binom{d+r-1}{r}.
\]

Hence one obtains the following.

\begin{theorem}\label{th:codim}
Assume that the directions $u_1,\dots,u_N$ are generic in $\RR^d$.
Then the space of compatible directional moment data up to order $R$ has
codimension
\[
\sum_{r=0}^R
\max\left\{
0,\,
N-\binom{d+r-1}{r}
\right\}
\]
inside the ambient space of all collections
\[
\{M_r(u_j)\}_{0\le r\le R,\ 1\le j\le N}.
\]
\end{theorem}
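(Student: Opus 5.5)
The codimension splits degree by degree. For each fixed order $r$, the map from a measure to the vector $(M_r(u_1),\dots,M_r(u_N))$ is linear in the symmetric moment tensor $T_r=\int z^{\otimes r}\,d\mu$. By Proposition~\ref{prop:projection-compatibility}(4), $M_r(u_j)=\langle T_r,u_j^{\otimes r}\rangle$. So the set of compatible data of order $r$ is the image of the evaluation map
\[
\mathrm{ev}_r:\operatorname{Sym}^r(\RR^d)^*\longrightarrow\RR^N,\qquad f\longmapsto (f(u_1),\dots,f(u_N)),
\]
where homogeneous forms of degree $r$ are paired with tensors. This is the rank statement invoked as Proposition~\ref{pr:rank}. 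Since the constraints for different $r$ involve disjoint sets of coordinates, the total space of compatible data is the product over $r$ of these images. The total codimension is then the sum of the codimensions $N-\operatorname{rank}(\mathrm{ev}_r)$. It therefore suffices to show that for generic directions
\[
\operatorname{rank}(\mathrm{ev}_r)=\min\Bigl\{N,\dim\operatorname{Sym}^r(\RR^d)\Bigr\}.
\]

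There is a preliminary point: I must check that every element of the image of $\mathrm{ev}_r$ is actually realized by moment data. Two issues arise. First, the tensors $T_r$ of actual measures span all of $\operatorname{Sym}^r$. This holds because finite signed combinations of point masses give $\sum w_i p_i^{\otimes r}$, and powers of linear forms span $\operatorname{Sym}^r$. Second, the degrees must be independent. I plan to interpret ``space of compatible data'' as the Zariski closure, or the linear span, of data coming from signed measures; then independence across $r$ follows by choosing measures with prescribed moment tensors in each degree separately. The cleanest route is signed atomic measures, where the moment map $w\mapsto (T_0,\dots,T_R)$ is linear and surjective once enough generic atoms are used, by a Vandermonde-type argument. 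If positivity is demanded, the image is a cone of full dimension in that span, so the codimension is unchanged.

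The genericity step is the main obstacle. Here the rank of $\mathrm{ev}_r$ is the rank of the $N\times\binom{d+r-1}{r}$ matrix whose rows are the monomial vectors of $u_j$. This is a statement about interpolation at generic points. The maximal minors are polynomials in the $u_j$, so it suffices to show that one of them is not identically zero. Equivalently, I need generic points $[u_j]\in\PP^{d-1}$ to impose independent conditions on degree-$r$ forms, up to $\min\{N,\dim\}$ of them. This holds because the Veronese variety spans its ambient space, so one can choose points inductively, each outside the span of the previous images whenever fewer than $\dim$ have been chosen. Hence the rank is maximal on a Zariski-dense open set, and intersecting these open sets over $r=0,\dots,R$ gives genericity for all degrees at once. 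Summing $N-\min\{N,\binom{d+r-1}{r}\}=\max\{0,N-\binom{d+r-1}{r}\}$ then yields the stated formula.
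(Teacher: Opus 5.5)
Your proposal is correct and follows the paper's argument. The paper also works degree by degree with the evaluation map on degree-$r$ forms, uses maximal rank $\min\{N,\binom{d+r-1}{r}\}$ for generic directions, and sums the codimensions $\max\{0,N-\binom{d+r-1}{r}\}$. Your extra justifications are steps the paper leaves implicit, and they make the argument more complete without changing the route: realizability and independence across degrees via signed atomic measures and positive cones, and generic maximal rank via the spanning Veronese image and nonvanishing minors.
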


\begin{proof}
For each fixed order $r$, the directional moments arise from evaluations of
a homogeneous polynomial of degree $r$ in $d$ variables.  The dimension of
this space equals
\[
\binom{d+r-1}{r}.
\]
For generic directions, the evaluation map has maximal rank, hence the
space of admissible vectors has codimension
\[
\max\left\{
0,\,
N-\binom{d+r-1}{r}
\right\}.
\]
Summing over $r=0,\dots,R$ gives the result.
\end{proof}

\begin{corollary}
For fixed dimension $d$ and fixed truncation order $R$, sufficiently many
generic projection directions necessarily satisfy nontrivial compatibility
relations beyond the KSS Hankel conditions in each direction separately.
\end{corollary}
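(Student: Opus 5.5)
The plan is to derive the corollary directly from Theorem~\ref{th:codim}, exhibiting for fixed $d$ and $R$ an explicit threshold on $N$ beyond which the codimension there is strictly positive. The relations it counts are linear in the moments for each order $r$, and they involve moments of the same order $r$ taken in different directions. The KSS Hankel conditions of Theorem~\ref{th:directional-hankel}, by contrast, are polynomial relations among moments of different orders in a single direction. The two families are therefore of a different nature.

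\emph{Step 1 (threshold).} Fix $d$ and $R$. Each term $\max\{0,N-\binom{d+r-1}{r}\}$ in Theorem~\ref{th:codim} is positive as soon as $N>\binom{d+r-1}{r}$. Take
\[
N_0:=\binom{d+R-1}{R}=\max_{0\le r\le R}\binom{d+r-1}{r}.
\]
For every $N>N_0$ all $R+1$ summands are positive, so the codimension is at least $\sum_{r=0}^R\bigl(N-\binom{d+r-1}{r}\bigr)>0$. For $r=0$ one only needs $N\ge2$; this recovers the equal-mass condition of Proposition~\ref{prop:projection-compatibility}(1). Concretely, a nonzero vector in the left kernel of the evaluation matrix $\bigl(u_j^{\beta}\bigr)_{j,\ |\beta|=r}$ gives an explicit linear form $\sum_j\lambda_jM_r(u_j)=0$. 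Such a form vanishes on all compatible data but not on the whole ambient space.

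\emph{Step 2 (not implied by per-direction Hankel conditions).} I must check that these linear relations are not consequences of the separate KSS ideals. The ideal generated by the $L_R(d,n)$ in the separate directions $u_j$ involves, for each $j$, only the variables $M_0(u_j),\dots,M_R(u_j)$. The product of the varieties $\mathcal L_R(d,n)$, one per direction, is therefore a product variety. Its projection to the coordinates $(M_r(u_1),\dots,M_r(u_N))$ is dominant onto $\RR^N$ once each single-direction variety projects dominantly onto the $M_r$-coordinate. This holds because each $\mathcal L_R(d,n)$ is nondegenerate: by Corollary~\ref{cor:degree} its ideal is generated in degree $n+1\ge2$, so it contains no linear form, and in particular it does not lie in the hyperplane $M_r=0$. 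Hence no nonzero linear form in $M_r(u_1),\dots,M_r(u_N)$ vanishes on the product. The relation from Step 1 is thus genuinely new.

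\emph{Main obstacle.} The delicate point is Step 2 in the projective setting. The KSS varieties are defined projectively in each direction, so I would pass to affine cones and argue with the product of cones. The correct formal statement is that the linear relation does not lie in the sum of the per-direction ideals, and the dominance argument above handles exactly this. Step 1 itself is immediate from Theorem~\ref{th:codim}.
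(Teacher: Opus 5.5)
Your proposal is correct and follows the paper's route: the corollary is read off from Theorem~\ref{th:codim}, whose $r$-th summand is positive as soon as $N>\binom{d+r-1}{r}$. Your Step~2 goes further than the paper, which only asserts after Proposition~\ref{pr:rank} that these relations are independent of the KSS ones, and it is cleanest to justify its dominance claim directly: translated polytopes give points of each per-direction variety with $M_r(u_j)\neq 0$, and homogeneity then lets that coordinate take every real value, which sidesteps both the hypothesis $R\ge 2n-d$ of Corollary~\ref{cor:degree} and the passage from ``no linear form in $L_R(d,n)$'' to ``$\mathcal L_R(d,n)\not\subset\{M_r=0\}$''.
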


\begin{remark}
The codimension estimate above is purely linear and therefore only captures
the first layer of multidirectional compatibility.  One expects additional
nonlinear constraints coming from positivity, convexity, and common support
conditions.
\end{remark}

\section{Beyond KSS: compatibility of several axial moment sequences}

\noindent
Kohn--Sturmfels--Shapiro \cite{KSS} completely describe the algebraic relations associated with a single projection direction. In contrast, the present section studies additional compatibility phenomena arising from several directions simultaneously.

The results of Kohn--Sturmfels--Shapiro describe the algebraic relations
among moments obtained from a \emph{single} projection direction.  In
particular, for a fixed direction $u\in \RR^d$, the sequence
\[
m_r(u)=\int_{\RR^d} \langle x,u\rangle^r\, d\mu(x)
\]
satisfies the classical Hankel determinantal relations associated with a
Prony system.  One may ask whether independent KSS conditions in several
directions are sufficient for the existence of a common underlying measure.
The answer is negative.

The main new feature in the multidirectional setting is the appearance of
\emph{compatibility constraints} between different directional moment
sequences.  These constraints are invisible from the viewpoint of any single
Hankel matrix.

Let
\[
M_r(u)=\int_{\RR^d} \langle x,u\rangle^r\, d\mu(x).
\]
Expanding the power gives
\[
M_r(u)=
\sum_{|\alpha|=r}
\binom{r}{\alpha}
u^\alpha m_\alpha,
\]
where
\[
m_\alpha=\int_{\RR^d} x^\alpha\, d\mu(x)
\]
are the ordinary multivariate moments.

Thus for each fixed $r$, the directional moments form a homogeneous
polynomial of degree $r$ in the direction variables.

\begin{proposition}\label{pr:compatibility}
Let $\{s_r^{(j)}\}_{r\ge0}$ be moment sequences assigned to directions
$u_1,\dots,u_N\in\RR^d$.  A necessary condition for the existence of a
compactly supported measure $\mu$ satisfying
\[
s_r^{(j)}=M_r(u_j)
\]
for all $r,j$ is the existence, for every $r\ge0$, of a homogeneous
polynomial $P_r(u)$ of degree $r$ such that
\[
P_r(u_j)=s_r^{(j)},\qquad j=1,\dots,N.
\]
Equivalently, the collections $\{s_r^{(j)}\}_j$ must lie in the image of
the Veronese evaluation map
\[
\mathrm{Sym}^r(\RR^d)^*
\longrightarrow
\RR^N,
\qquad
P\mapsto (P(u_1),\dots,P(u_N)).
\]
\end{proposition}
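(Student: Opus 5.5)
The plan is to produce the required polynomial directly from the moments of the hypothetical measure. Suppose $\mu$ is a compactly supported measure with $s_r^{(j)}=M_r(u_j)$ for all $r$ and $j$. For each $r\ge 0$ we define
\[
P_r(u):=\int_{\RR^d}\langle x,u\rangle^r\,d\mu(x).
\]
Compact support makes every integral $\int |x^\alpha|\,d\mu$ finite. So we may expand $\langle x,u\rangle^r=\sum_{|\alpha|=r}\binom{r}{\alpha}u^\alpha x^\alpha$ by the multinomial theorem and integrate term by term. This gives
\[
P_r(u)=\sum_{|\alpha|=r}\binom{r}{\alpha}m_\alpha u^\alpha,
\]
which is visibly a homogeneous polynomial of degree $r$ in $u$ with real coefficients. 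This is the expansion already displayed before the proposition, and also item (4) of Proposition~\ref{prop:projection-compatibility}: $P_r(u)$ is the diagonal evaluation of the symmetric tensor $T_r$.

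Evaluating at $u=u_j$ gives $P_r(u_j)=M_r(u_j)=s_r^{(j)}$ for every $j$, which is the necessary condition. No positivity of $\mu$ is needed, only finiteness of the moments.

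For the equivalent formulation, we identify $\mathrm{Sym}^r(\RR^d)^*$ with the space of homogeneous degree-$r$ polynomials on $\RR^d$. The evaluation map $P\mapsto (P(u_1),\dots,P(u_N))$ is linear. The existence of $P_r$ with $P_r(u_j)=s_r^{(j)}$ for all $j$ is then literally the statement that the vector $(s_r^{(1)},\dots,s_r^{(N)})$ lies in the image of this map. The two formulations therefore agree by definition.

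There is no real obstacle here. The only point that needs care is justifying the interchange of the finite multinomial sum with the integral, and this is immediate because the sum is finite and each monomial is integrable on the compact support.
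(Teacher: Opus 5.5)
Your proposal is correct and follows the same route as the paper: set $P_r(u):=M_r(u)$, use the multinomial expansion to see that it is a homogeneous polynomial of degree $r$, and evaluate it at the $u_j$. Beyond the paper's brief argument, you also justify integrability of the monomials on the compact support and spell out the identification of $\mathrm{Sym}^r(\RR^d)^*$ with degree-$r$ forms, which gives the equivalence with the image of the evaluation map.
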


\begin{proof}
The formula above shows that
\[
P_r(u):=M_r(u)
\]
is a homogeneous polynomial of degree $r$.  Hence the values
$s_r^{(j)}$ must arise as evaluations of one and the same polynomial
at the prescribed directions.
\end{proof}

\begin{remark}
For one direction this condition is vacuous and one recovers precisely the
KSS picture.  Starting already from two or three directions, nontrivial
cross-relations appear.
\end{remark}

We now illustrate this phenomenon in the planar case.

\begin{example}
Consider a compactly supported planar probability measure $\mu$ and let
\[
a_r=M_r(e_x),\qquad
b_r=M_r(e_y),
\]
be the axial moments in the coordinate directions.
Then
\[
M_2(u,v)=u^2 a_2+2uv\,m_{11}+v^2 b_2.
\]
Hence the second directional moments along \emph{all} directions are
determined not only by the two axial sequences but also by the mixed moment
$m_{11}$.

In particular, arbitrary independent choices of two admissible Hankel
moment sequences $\{a_r\}$ and $\{b_r\}$ do not determine a planar measure:
one still has to solve the compatibility problem for the mixed moments.
\end{example}

The same phenomenon persists in higher order.  For example,
\[
M_3(u,v)
=
u^3 m_{30}
+
3u^2v\,m_{21}
+
3uv^2\,m_{12}
+
v^3 m_{03},
\]
so knowledge of the third moments in finitely many directions imposes
linear constraints on the mixed moments.

\begin{proposition}\label{pr:rank}
Fix $r\ge0$.  The space of degree-$r$ directional moment functions
\[
u\mapsto M_r(u)
\]
has dimension
\[
\binom{d+r-1}{r}.
\]
Consequently, any collection of directional moment data in more than
$\binom{d+r-1}{r}$ generic directions satisfies nontrivial linear
relations.
\end{proposition}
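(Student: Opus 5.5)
The proof has two parts. First we show that every directional moment function $u\mapsto M_r(u)$ is a homogeneous polynomial of degree $r$. Then we show that, as $\mu$ ranges over compactly supported measures, these functions fill the whole space of such polynomials; strictly speaking, what is defined is the span of the realizable moment functions. The dimension count then reduces to counting monomials, and the "consequently" part is a rank argument for an evaluation map.

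\emph{Step 1 (upper bound).} By the multinomial expansion displayed above,
\[
M_r(u)=\sum_{|\alpha|=r}\binom{r}{\alpha}u^\alpha m_\alpha .
\]
So $M_r$ lies in the space $\mathrm{Sym}^r(\RR^d)^*$ of homogeneous polynomials of degree $r$ in $d$ variables. This space has the monomial basis $\{u^\alpha\}_{|\alpha|=r}$. By stars and bars, the number of such $\alpha$ is $\binom{d+r-1}{r}$.

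\emph{Step 2 (spanning).} We must show that every monomial $u^\alpha$ lies in the span of realizable functions $M_r$. Take $\mu=\delta_p$, a point mass at $p\in\RR^d$. Then $M_r(u)=\langle p,u\rangle^r$. Powers of linear forms span $\mathrm{Sym}^r(\RR^d)^*$; this is the classical fact that $r$-th powers $\ell^r$ span degree-$r$ forms, valid in characteristic zero. One argument: if a linear functional on $\mathrm{Sym}^r$ kills all $\langle p,\cdot\rangle^r$, then its pairing with them is a polynomial in $p$ vanishing identically, and its coefficients are nonzero multiples of the functional's values on the monomials, so the functional is zero. If we insist on absolutely continuous or polytopal $\mu$, we can replace $\delta_p$ by a small cube around $p$ and take limits, or use differences. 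Since the span is a linear subspace and is closed, the conclusion is unchanged. This spanning step is the main point needing care: it is the only place where the correct interpretation of "space of moment functions" as a linear span matters.

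\emph{Step 3 (consequence).} Fix directions $u_1,\dots,u_N$ with $N>D:=\binom{d+r-1}{r}$. The data vector $(M_r(u_1),\dots,M_r(u_N))$ is the image of $M_r$ under the linear evaluation map $\mathrm{Sym}^r(\RR^d)^*\to\RR^N$. The image of this map has dimension at most $D<N$, so there is a nonzero linear functional on $\RR^N$ vanishing on it; this is a nontrivial linear relation among the data. For generic directions the evaluation map has rank exactly $D$, since the Veronese images of generic points are in general position. Hence the relations form a space of dimension exactly $N-D$, which is consistent with Theorem~\ref{th:codim}.
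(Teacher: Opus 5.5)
Your proof is correct, and your Step~1 is exactly the paper's argument. The paper's proof consists only of noting that $u\mapsto M_r(u)$ is a homogeneous polynomial of degree $r$ in $d$ variables, and then counting the dimension of that space of polynomials. Strictly speaking, this shows only that the directional moment functions lie in $\mathrm{Sym}^r(\RR^d)^*$, i.e.\ that their span has dimension at most $\binom{d+r-1}{r}$. The paper tacitly identifies the two spaces.

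Your Step~2 supplies the missing reverse inclusion. Point masses give $\langle p,u\rangle^r$, and $r$-th powers of linear forms span all degree-$r$ forms. Cube approximation, together with closedness of finite-dimensional subspaces, covers the case where only absolutely continuous or polytopal measures are allowed. So you actually prove the equality as stated.

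What each version buys: the ``consequently'' clause needs only the upper bound, so the paper's one-line argument suffices for it. Your spanning step, combined with generic full rank of the evaluation map (which follows from the nondegeneracy of the Veronese variety), shows there are exactly $N-\binom{d+r-1}{r}$ independent universal relations and no more. That is the form needed for an exact, rather than merely lower-bound, codimension count as in Theorem~\ref{th:codim}.
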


\begin{proof}
The map
\[
u\mapsto M_r(u)
\]
is a homogeneous polynomial of degree $r$ in $d$ variables.  The vector
space of such polynomials has the stated dimension.
\end{proof}

Proposition~\ref{pr:rank} gives a family of universal linear relations
between directional moment measurements.  These relations are independent
from the Hankel determinantal relations of Kohn--Sturmfels--Shapiro and
therefore constitute genuinely new constraints in the multidirectional
inverse problem.

This suggests the following natural problem.

\begin{problem}
Describe the algebraic variety of collections of directional moment
sequences
\[
\{M_r(u_j)\}_{r,j}
\]
arising from compactly supported measures in $\RR^d$.
In particular, determine the compatibility relations between the KSS
determinantal conditions associated with different directions.
\end{problem}

The multidirectional compatibility problem appears to be substantially more
rigid than the one-directional situation studied in \cite{KSS}.  Already
for polygons in the plane one obtains nontrivial matching problems between
the projected vertices in different directions, closely related to inverse
tomography and phase retrieval.

At present, the paper should probably be viewed as a hybrid between a
research note and a conceptual reorganization of the Kohn--Sturmfels--
Shapiro theory from the viewpoint of inverse problems and Prony systems.
The genuinely new part is not the one-directional Hankel geometry itself,
which is already contained in \cite{KSS}, but rather:
\begin{itemize}
\item the explicit Prony reconstruction interpretation;
\item positivity and spline-cone considerations;
\item the distinction between spline realizability and realizability by a
single convex polytope;
\item the multidirectional compatibility constraints introduced in the last
section.
\end{itemize}

The latter direction appears to be the most promising for further
development.  In particular, it would be interesting to understand the full
algebraic structure of collections of directional moment sequences arising
from one common measure and to compare this structure with classical
tomographic reconstruction problems.

\end{document}